\newtheorem{theorem}{Theorem}[section]
\newtheorem{proposition}[theorem]{Proposition}
\newtheorem{lemma}[theorem]{Lemma}
\newtheorem{corollary}[theorem]{Corollary}
\theoremstyle{definition}
\newtheorem{example}[theorem]{Example}
\begin{document}

\author[P. Danchev]{Peter Danchev}
\address{Institute of Mathematics and Informatics, Bulgarian Academy of Sciences, 1113 Sofia, Bulgaria}
\email{danchev@math.bas.bg; pvdanchev@yahoo.com}
\author[A. Javan]{Arash Javan}
\address{Department of Mathematics, Tarbiat Modares University, 14115-111 Tehran Jalal AleAhmad Nasr, Iran}
\email{a.darajavan@modares.ac.ir; a.darajavan@gmail.com}
\author[A. Moussavi]{Ahmad Moussavi}
\address{Department of Mathematics, Tarbiat Modares University, 14115-111 Tehran Jalal AleAhmad Nasr, Iran}
\email{moussavi.a@modares.ac.ir; mpussavi.a@gmail.com}

\title[CSNC Rings]{Rings Whose Clean and Nil-Clean Elements Have Some Clean-Like Properties}
\keywords{(strongly, uniquely) clean element, (strongly, uniquely) nil-clean element}
\subjclass[2010]{16S34, 16U60}

\maketitle




\begin{abstract}
We define two types of rings, namely the so-called {\it CSNC} and {\it NCUC} that are those rings whose clean elements are strongly nil-clean, respectively, whose nil-clean elements are uniquely clean. Our results obtained in this paper somewhat expand these obtained by Calugareanu-Zhou in Mediterr. J. Math. (2023) and by Cui-Danchev-Jin in Publ. Math. Debrecen (2024), respectively.
\end{abstract}

\section{Introduction and Motivation}

Everywhere in the text of the present article, let $R$ be an arbitrary associative (but {\it not} necessarily commutative) ring having identity element, usually stated as $1$. Standardly, for such a ring $R$, the letters $U(R)$, $\rm{Nil}(R)$ and ${\rm Id}(R)$ are designed for the set of invertible elements (also termed as the unit group of $R$), the set of nilpotent elements and the set of idempotent elements in $R$, respectively. Likewise, $J(R)$ denotes the Jacobson radical of $R$. For all other unexplained explicitly terminology and notations, we refer the interested reader to the classical source \cite{L}.

In order to illustrate our achievements here, we now need to recall the necessary fundamental material as follows:

Mimicking \cite{N}, an element $a$ from a ring $R$ is called {\it clean} if there exists $e\in {\rm Id}(R)$ such that $a-e\in U(R)$. Then, $R$ is said to be {\it clean} if every element of $R$ is clean. In addition, $a$ is called {\it strongly clean} provided $ae=ea$ and, if each element of $R$ is strongly clean, then $R$ is said to {\it strongly clean} too (see \cite{HN}). On the other side, imitating \cite{NZ}, $a\in R$ is called {\it uniquely clean} if there exists a unique $e \in {\rm Id}(R)$ such that $a-e \in U(R)$. In particular, a ring $R$ is said to be {\it uniquely clean} (or just {\it UC} for short) if every element in $R$ is uniquely clean.

Generalizing these notions, in \cite{CWZ} was defined an element $a \in R$ to be {\it uniquely strongly clean} if there exists a unique $e \in {\rm Id}(R)$ such that $a-e \in U(R)$ and $ae=ea$. In particular, a ring $R$ is said to be {\it uniquely strongly clean} (or just {\it USC} for short) if each element in $R$ is uniquely strongly clean.

In a similar vein, expanding the first paragraph of the above concepts, in \cite{CZ} a ring is said to be {\it CUC} if any clean element is uniquely clean, and a ring is said to be {\it UUC} if any unit is uniquely clean.

Acting on this idea, in \cite{CDJ} were examined those rings whose nil-clean, resp. clean, elements are uniquely nil-clean by calling them as f{\it NCUNC} and {\it CUNC} rings, respectively. On the same hand, in \cite{DHM} were studied those rings whose clean elements are uniquely strongly clean by terming them as {\it CUSC} rings.

And so, motivated by the problems posed in \cite{CDJ}, our work targets somewhat to extend the presented definitions in an attractive manner. So, we come to our key instruments.

\begin{enumerate}
\item A ring $R$ is said to be a {\it CSNC} ring if every clean element of $R$ is strongly nil-clean.
\item A ring $R$ is said to be a {\it NCUC} ring if every nil-clean element of $R$ is uniquely clean.
\end{enumerate}

In this note, our focus is on the study of CSNC rings and NCUC rings. However, we show that if $R$ is an abelian ring, that is, all idempotents are central, then CUNC rings are exactly CSNC rings as well as NCUC rings are precisely NCUSC rings, i.e., all nil-clean elements are uniquely strongly  clean.

\medskip

Likewise, we show that the next diagram holds.

\medskip

\begin{center}

\tikzset{every picture/.style={line width=0.75pt}} 

\begin{tikzpicture}[x=0.75pt,y=0.75pt,yscale=-1,xscale=1]

\draw   (157,100.7) .. controls (157,74.36) and (178.36,53) .. (204.7,53) -- (415.3,53) .. controls (441.64,53) and (463,74.36) .. (463,100.7) -- (463,243.8) .. controls (463,270.14) and (441.64,291.5) .. (415.3,291.5) -- (204.7,291.5) .. controls (178.36,291.5) and (157,270.14) .. (157,243.8) -- cycle ;
\draw   (264.75,153.75) .. controls (264.75,145.74) and (271.24,139.25) .. (279.25,139.25) -- (340.75,139.25) .. controls (348.76,139.25) and (355.25,145.74) .. (355.25,153.75) -- (355.25,197.25) .. controls (355.25,205.26) and (348.76,211.75) .. (340.75,211.75) -- (279.25,211.75) .. controls (271.24,211.75) and (264.75,205.26) .. (264.75,197.25) -- cycle ;
\draw   (252.25,176.5) .. controls (252.25,121.96) and (296.46,77.75) .. (351,77.75) .. controls (405.54,77.75) and (449.75,121.96) .. (449.75,176.5) .. controls (449.75,231.04) and (405.54,275.25) .. (351,275.25) .. controls (296.46,275.25) and (252.25,231.04) .. (252.25,176.5) -- cycle ;
\draw   (285,175.5) .. controls (285,161.69) and (296.19,150.5) .. (310,150.5) .. controls (323.81,150.5) and (335,161.69) .. (335,175.5) .. controls (335,189.31) and (323.81,200.5) .. (310,200.5) .. controls (296.19,200.5) and (285,189.31) .. (285,175.5) -- cycle ;
\draw   (171.25,177.5) .. controls (171.25,122.96) and (215.46,78.75) .. (270,78.75) .. controls (324.54,78.75) and (368.75,122.96) .. (368.75,177.5) .. controls (368.75,232.04) and (324.54,276.25) .. (270,276.25) .. controls (215.46,276.25) and (171.25,232.04) .. (171.25,177.5) -- cycle ;

\draw (291.5,165.85) node [anchor=north west][inner sep=0.75pt]   [align=left] {NCC};
\draw (179.97,169.11) node [anchor=north west][inner sep=0.75pt]  [rotate=-359.79] [align=left] {CSNC};
\draw (286,28) node [anchor=north west][inner sep=0.75pt]   [align=left] {CUNC};
\draw (389,160) node [anchor=north west][inner sep=0.75pt]   [align=left] {NCUC};
\draw (279,116) node [anchor=north west][inner sep=0.75pt]   [align=left] {NCSUC};

\end{tikzpicture}

\end{center}

\medskip

We will also demonstrate in the sequel that all NCUNC and NCUC rings are the same.

Nevertheless, to better understand the above definitions, we list the following table.

\begin{table}[h]
\begin{tabular}{
>{\columncolor[HTML]{FFFFFF}}c |
>{\columncolor[HTML]{FFFFFF}}c |
>{\columncolor[HTML]{FFFFFF}}c |
>{\columncolor[HTML]{FFFFFF}}c |
>{\columncolor[HTML]{FFFFFF}}c |
>{\columncolor[HTML]{FFFFFF}}c
>{\columncolor[HTML]{FFFFFF}}c }
{\color[HTML]{000000} }  & CUNC      & NCUC          & CSNC        & NCSUC       & NCC         \\ \hline
${\rm M}_2(\mathbb{Z}_2)$      & $\times$   & $\times$       & $\times$     & $\times$     & $\checkmark$ \\
${\rm T}_2(\mathbb{Z}_3)$      & $\times$   & $\times$       & $\times$     & $\checkmark$ & $\checkmark$ \\
${\rm T}_2(\mathbb{Z}_2)$      & $\times$   & $\times$       & $\checkmark$ & $\checkmark$ & $\checkmark$ \\
$\mathbb{Z}_3$           & $\times$   & $\checkmark$   & $\times$     & $\checkmark$ & $\checkmark$
\end{tabular}
\end{table}

\section{Main Results}

We distribute our chief results into two subsections as follows:

\subsection{CSNC rings}

We start here with the following technicality.

\begin{lemma}\cite[Corollary 2.8]{khurana2015uniquely} \label{lemma 01}
For every $f \in {\rm Id}(R)$, we have $f$ is a uniquely clean element if, and only if, $f \in {\rm Z}(R)$.
\end{lemma}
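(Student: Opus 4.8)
The plan is to prove both implications by explicit manipulation with the idempotent $f$ and the involution $2f-1$. The observation used throughout is that $(2f-1)^2 = 4f^2 - 4f + 1 = 1$, so $2f-1 \in U(R)$; since $f - (1-f) = 2f - 1$, the idempotent $1-f$ is always a witness that $f$ is clean. The entire content of the lemma is therefore about the \emph{uniqueness} of this witness, i.e.\ about whether $1-f$ is the \emph{only} idempotent $e$ with $f - e \in U(R)$.

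For the direction ``$f \in {\rm Z}(R) \Rightarrow f$ uniquely clean'', suppose $e \in {\rm Id}(R)$ and $v := f - e \in U(R)$; I want to force $e = 1-f$. First, centrality of $f$ gives $fv = f - fe = f - ef = vf$, so $v$ (hence $v^{-1}$) commutes with $f$. Expanding $e^2 = e$ as $(f-v)^2 = f - v$ and using $f^2 = f$ together with $fv = vf$ yields $v^2 - 2fv + v = 0$, that is $v(v - 2f + 1) = 0$; left-multiplying by $v^{-1}$ gives $v = 2f-1$ and hence $e = f - v = 1 - f$, as desired.

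For the converse I would argue contrapositively: assuming $f \notin {\rm Z}(R)$, I exhibit a second idempotent $e \ne 1-f$ with $f - e \in U(R)$. Non-centrality of the idempotent $f$ means $fr(1-f) \ne 0$ for some $r \in R$, or else $(1-f)rf \ne 0$ for some $r \in R$. In the first case put $c := fr(1-f)$, so $c \ne 0$, $fc = c$, $cf = 0$ and $c^2 = 0$; a short computation then shows $e := (1-f) - c$ is idempotent, while $f - e = (2f-1) + c = (2f-1)(1+c)$ is a product of two units, since $(2f-1)c = c$ (because $fc = c$) and $1+c$ is a unit with inverse $1-c$ (because $c^2 = 0$). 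Thus $e$ is an idempotent distinct from $1-f$ with $f - e \in U(R)$, contradicting unique cleanness. The second case is symmetric: with $d := (1-f)rf$ (so $df = d$, $fd = 0$, $d^2 = 0$) one takes $e := (1-f) - d$ and writes $f - e = (2f-1) + d = (1+d)(2f-1)$.

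The routine parts are the two ``$e^2 = e$'' checks and the bookkeeping with $c^2 = d^2 = 0$. The only place that needs a moment's thought is guessing the correct perturbation $(1-f) \mp c$ of the canonical idempotent $1-f$ in the converse; once that ansatz is fixed, everything reduces to the identity $(2f-1)^2 = 1$, so I do not expect a real obstacle beyond finding that perturbation.
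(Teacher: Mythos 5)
Your proof is correct in both directions: the forward implication is a clean computation using $fv=vf$ to force $v=2f-1$, and the converse correctly perturbs the canonical witness $1-f$ by the Peirce component $c=fr(1-f)$ (or $d=(1-f)rf$), with all the claimed identities $fc=c$, $cf=0$, $c^2=0$, $e^2=e$, and $f-e=(2f-1)(1+c)$ checking out. Note, however, that the paper offers no proof of this lemma at all --- it is imported verbatim from Khurana--Lam--Nielsen--Zhou \cite[Corollary 2.8]{khurana2015uniquely} --- so there is nothing internal to compare against; what you have written is a complete, self-contained elementary argument that is essentially the standard one for this fact (non-central idempotents admit a second clean decomposition obtained by an off-diagonal nilpotent perturbation), and it could serve as a proof of the lemma if the authors wished to make the paper self-contained.
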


\begin{lemma}\cite[Theorem 2.1]{kocsan2016nil}\label{lemma 02}
If the element $a$ is strongly nil-clean, then $a$ is strongly clean.
\end{lemma}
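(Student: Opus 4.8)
The plan is to upgrade the given strongly nil‑clean decomposition of $a$ to a strongly clean one by the classical trick of passing from the idempotent $e$ to its complement $1-e$. Write $a = e + n$ with $e \in {\rm Id}(R)$, $n \in {\rm Nil}(R)$ and $en = ne$. Set $f := 1-e$ and $u := a - f = (2e-1) + n$, so that automatically $a = f + u$ with $f$ idempotent; what remains is to verify that $u \in U(R)$ and that $f$ commutes with $u$ (equivalently $af = fa$), which is exactly the content of $a$ being strongly clean.

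For invertibility of $u$, I would first note that $2e-1$ is a unit: $(2e-1)^2 = 4e^2 - 4e + 1 = 1$, so it is its own inverse. Since $en = ne$, the element $2e-1$ commutes with $n$, hence $(2e-1)^{-1}n = (2e-1)n$ is nilpotent — its $k$‑th power is $(2e-1)^{k}n^{k}$, which vanishes for large $k$ — and therefore $u = (2e-1)\bigl(1 + (2e-1)n\bigr)$ is a product of two units. The only nonformal ingredient here is the elementary remark that a unit plus a commuting nilpotent is again a unit, which follows from the geometric‑series inverse of $1 + (\text{nilpotent})$.

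For the commutation $fu = uf$, I would simply expand both sides using $e^{2} = e$ and $en = ne$:
\[
fu = (1-e)\bigl((2e-1)+n\bigr) = (e-1) + (n - ne) = \bigl((2e-1)+n\bigr)(1-e) = uf .
\]
Hence $a = f + u$ with $f \in {\rm Id}(R)$, $u \in U(R)$ and $fu = uf$, i.e.\ $a$ is strongly clean.

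Conceptually nothing deeper is going on: the hypothesis $en = ne$ forces $n$ to lie in the diagonal block $eRe \oplus (1-e)R(1-e)$ of the Peirce decomposition at $e$, so in the corner $eRe$ the element $a$ reads as ``identity plus nilpotent'' (a unit), while in $(1-e)R(1-e)$ it reads as a nilpotent; splicing together the obvious strongly clean decompositions of these two corners produces precisely the pair $(f,u)$ above. I do not anticipate any real obstacle here — the argument is a short computation once the complement idempotent is written down.
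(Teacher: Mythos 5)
Your proof is correct and takes essentially the same approach as the paper's: both pass to the complementary idempotent $1-e$ and the unit $(2e-1)+n$, giving the decomposition $a=(1-e)+((2e-1)+n)$. The paper merely states this decomposition without the verifications of invertibility and commutation that you spell out.
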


\begin{proof}
If $a$ is strongly nil-clean, we write $a=e+q$ and $eq=qe$. Therefore, $a=(1-e)+((2e-1)+q)$, as required.
\end{proof}

\begin{theorem} \label{thm 2.1}
Let $R$ be a ring. Then, the following are equivalent:

\begin{enumerate}
\item $R$ is a CSNC ring.
\item For every clean element $a\in R$, the relation $a - a^2 \in {\rm Nil}(R)$ in true.
\end{enumerate}

\end{theorem}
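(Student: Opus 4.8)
The plan is to prove the two implications separately, with the forward direction being essentially a definitional unwinding and the backward direction requiring a bit more care.

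\smallskip

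\emph{(i)$\Rightarrow$(ii).} Suppose $R$ is CSNC and let $a \in R$ be clean. By hypothesis $a$ is strongly nil-clean, so we may write $a = e + q$ with $e \in {\rm Id}(R)$, $q \in {\rm Nil}(R)$, and $eq = qe$. The goal is to show $a - a^2 \in {\rm Nil}(R)$. First compute $a - a^2 = a(1-a) = (e+q)(1 - e - q)$. Using $e^2 = e$ and the commuting relation $eq = qe$, expand this product; the idempotent part $e(1-e)$ vanishes, and every surviving term contains a factor of $q$. Since $q$ is central in the (commutative) subring generated by $e$ and $q$, and $q$ is nilpotent, the resulting expression is a sum of elements each of which lies in the nil ideal of that subring (or one argues directly that $a - a^2$ is itself nilpotent because it is a polynomial in the two commuting elements $e,q$ with $q$ nilpotent and no constant term surviving). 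Hence $a - a^2 \in {\rm Nil}(R)$.

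\smallskip

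\emph{(ii)$\Rightarrow$(i).} Conversely, assume every clean element $a$ satisfies $a - a^2 \in {\rm Nil}(R)$, and let $a \in R$ be clean; we must show $a$ is strongly nil-clean. Set $q := a - a^2 \in {\rm Nil}(R)$, so $a^2 = a - q$. The strategy is the standard trick of lifting an idempotent through the nilpotent $q$: since $a$ commutes with $a^2$ and hence with $q$, work inside the commutative subring $S = \Z[a] \subseteq R$, in which $q$ is nilpotent. In $S/({\rm Nil}(S))$ the image $\bar a$ satisfies $\bar a = \bar a^2$, so it is idempotent; lift it to an idempotent $e \in S$ (idempotents lift modulo nil ideals, and since $S$ is commutative the lift commutes with $a$). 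Then $a - e$ lies in ${\rm Nil}(S) \subseteq {\rm Nil}(R)$ and commutes with $e$, so $a = e + (a-e)$ exhibits $a$ as strongly nil-clean. Therefore $R$ is CSNC.

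\smallskip

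The main obstacle I anticipate is making the lifting step in (ii)$\Rightarrow$(i) fully rigorous: one needs that ${\rm Nil}(S)$ is an ideal of the commutative ring $S = \Z[a]$ (true since $S$ is commutative, so the nilradical is an ideal), that idempotents lift modulo a nil ideal even when that ideal is not assumed nilpotent as a whole (this is classical, but worth citing or giving the short argument via a polynomial in $a$), and that the lifted idempotent genuinely commutes with $a$ — which is automatic here precisely because we have restricted to the commutative subring generated by $a$. Once commutativity with $a$ is secured, "strongly nil-clean" follows immediately, so the whole proof hinges on staying inside $\Z[a]$. I would also double-check that no appeal to Lemma \ref{lemma 02} or Lemma \ref{lemma 01} is needed here; they set context but this particular equivalence is self-contained.
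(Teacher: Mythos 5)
Your proposal is correct and follows essentially the same route as the paper: the forward direction is the same direct computation $a-a^2=q(1-2e-q)$, and for the converse the paper simply makes your idempotent-lifting step explicit by exhibiting the lift as the polynomial $e=\sum_{i=0}^{n}\binom{2n}{i}a^{2n-i}(1-a)^{i}\in\mathbb{Z}[a]$, which is idempotent and satisfies $a-e=h(a)(a-a^2)\in{\rm Nil}(R)$ for some $h(t)\in\mathbb{Z}[t]$. Because this lift lies in $\mathbb{Z}[a]$ it automatically commutes with $a$, which is precisely the point you flagged as the one needing care.
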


\begin{proof}
(i) $\Rightarrow$ (ii). Assume that the element $a$ is clean. Then, there exist a nilpotent element $q\in R$ and an idempotent element $e\in R$ such that $a=e+q$ and $eq=qe$. Therefore,
$$
a-a^2=e-q-(e-q)^2=q(1-2e-q) \in {\rm Nil}(R),
$$
as claimed.

(ii) $\Rightarrow$ (i). Conversely, assume that (2) holds and that the element $a$ is clean. Thus, $a-a^2 \in {\rm Nil}(R)$. Now, let us set
$$e:=\sum_{i=0}^{n}\binom{2n}{i}a^{2n-i}(1-a)^i.$$
It can easily be now shown that $e$ is an idempotent and $a-e=h(a)(a-a^2)\in {\rm Nil}(R)$, where $h(t) \in \mathbb{Z}[t]$, as asserted.
\end{proof}

As a consequence, we extract:

\begin{corollary} \label{cor 2.2}
 Let $R$ be a ring. Then, the following are equivalent:
\begin{enumerate}
\item R is a CSNC ring.
\item For any clean element $a \in R$, there exists a unique idempotent $e \in R$ such that $a - e \in {\rm Nil}(R)$ and $ae = ea$.
\end{enumerate}
\end{corollary}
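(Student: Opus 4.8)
The plan is to establish the two implications separately; $(2)\Rightarrow(1)$ is essentially a tautology, while $(1)\Rightarrow(2)$ carries the real content, namely the uniqueness of the idempotent.

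For $(2)\Rightarrow(1)$ I would simply note that the existence part of statement $(2)$ says precisely that an arbitrary clean element $a$ admits a decomposition $a=e+(a-e)$ with $e$ idempotent, $a-e$ nilpotent and $e$ commuting with $a$; this is exactly the assertion that $a$ is strongly nil-clean, so $R$ is a CSNC ring, and the uniqueness hypothesis in $(2)$ is not even needed for this direction.

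For $(1)\Rightarrow(2)$, start from a clean element $a$. Since $R$ is CSNC, $a$ is strongly nil-clean, which immediately yields an idempotent $e$ with $a-e\in{\rm Nil}(R)$ and $ae=ea$; this is the existence half. For uniqueness I would exploit the explicit construction in the proof of Theorem~\ref{thm 2.1}: because $a$ is clean, that theorem gives $a-a^2\in{\rm Nil}(R)$ and produces the distinguished idempotent
\[
e_0=\sum_{i=0}^{n}\binom{2n}{i}a^{2n-i}(1-a)^i\in\mathbb{Z}[a],
\]
a polynomial in $a$, satisfying $e_0^2=e_0$, $a-e_0\in{\rm Nil}(R)$ and (hence) $ae_0=e_0a$. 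It then suffices to show that any idempotent $e'$ with $a-e'\in{\rm Nil}(R)$ and $ae'=e'a$ must coincide with $e_0$. Since $a$ and $e'$ commute, the subring $S:=\mathbb{Z}[a,e']$ of $R$ is commutative and it contains $e_0$; inside $S$ the element $x:=e_0-e'=(a-e')-(a-e_0)$ is a difference of two nilpotents and hence nilpotent, while the idempotent identities $e_0^2=e_0$, $e'^2=e'$ give, after a routine commutative computation, $x^3=x$, so that $x=x^{3^k}$ for every $k$ forces $x=0$ and $e'=e_0$. This proves the uniqueness in $(2)$ and, as a bonus, that the only admissible idempotent is the canonical $e_0$.

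The step I expect to be the main obstacle is precisely this uniqueness claim: a priori two idempotents coming from different strongly nil-clean decompositions of $a$ need not commute with each other, so one cannot throw them into a single commutative subring directly. The device that unlocks it is to realize one of them, via Theorem~\ref{thm 2.1}, as a polynomial in $a$; then every competitor automatically lies in the commutative ring $\mathbb{Z}[a,e']$ alongside it, and the elementary fact that a nilpotent element $x$ satisfying $x^3=x$ must vanish closes the argument. (Equivalently, one may phrase this via the fact that idempotents lift uniquely modulo a nil ideal in a commutative ring, applied to the quotient map $S\to S/{\rm Nil}(S)$.)
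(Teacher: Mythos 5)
Your argument is correct and follows essentially the same route as the paper: both proofs obtain the distinguished idempotent as a polynomial in $a$ via Theorem~\ref{thm 2.1}, so that any competing idempotent commuting with $a$ also commutes with it, and then conclude from $e-f$ being nilpotent together with $(e-f)^3=e-f$ that $e=f$. No gaps; your explicit remark about why the two idempotents can be forced into a commutative subring is exactly the point the paper's proof relies on.
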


\begin{proof}
(i) $\Rightarrow$ (ii). By assumption, for any clean element $a \in R$, there exists an idempotent $e \in \mathbb{Z}[t]$ such that $a - e \in {\rm Nil}(R)$. Suppose $a - f \in {\rm Nil}(R)$ and $af = fa$, where $f \in R$ is an idempotent. Clearly, $ae = ea$, $af = fa$ and since $e \in \mathbb{Z}[t]$ and $af = fa$ we must have $ef = fe$; hence, $(a - f)(a - e)=(a - e)(a - f)$. This unambiguously shows that $e - f =(a - f) - (a - e)  \in {\rm Nil}(R)$. Apparently, $(e - f)^3 =e -f$ and so $e = f$, as desired.\\
(ii) $\Rightarrow$ (i). This implication is trivial, and so we omit the details.
\end{proof}

We now arrive at the following technicality.

\begin{proposition} \label{pro 2.3}
Let R be a CSNC ring. Then, the following conditions are valid:
\begin{enumerate}
    \item $2 \in {\rm Nil}(R)$.
    \item $J(R) \subseteq  {\rm Nil}(R)$.
\end{enumerate}
\end{proposition}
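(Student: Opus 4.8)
The plan is to reduce both statements to the characterization in Theorem~\ref{thm 2.1}: in a CSNC ring, every clean element $a$ satisfies $a-a^2\in{\rm Nil}(R)$. The engine behind both parts is that certain specific elements are visibly clean — every unit $u$ is clean (write $u=0+u$), and for every unit $u$ the element $1+u$ is clean (idempotent $1$ plus unit $u$) — so the conclusion of Theorem~\ref{thm 2.1} can be applied to them.

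For (i), I would take $a=-1$. Since $-1\in U(R)$ it is clean, so Theorem~\ref{thm 2.1} yields $-1-(-1)^2=-2\in{\rm Nil}(R)$, whence $2\in{\rm Nil}(R)$ because $(-2)^n=0$ forces $2^n=0$. (One could instead unwind the definition directly: from $-1=e+q$ with $e^2=e$, $q\in{\rm Nil}(R)$, $eq=qe$, the identity $e^2=e$ gives $2=-3q-q^2\in{\rm Nil}(R)$; but going through Theorem~\ref{thm 2.1} is shorter.)

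For (ii), let $j\in J(R)$. Then $1-j\in U(R)$, so $j=1+(j-1)$ is clean. By Theorem~\ref{thm 2.1}, $j-j^2=j(1-j)\in{\rm Nil}(R)$, say $\bigl(j(1-j)\bigr)^n=0$. Since $j$ and $1-j$ commute this reads $j^n(1-j)^n=0$, and multiplying on the right by $\bigl((1-j)^{-1}\bigr)^n$ gives $j^n=0$. Hence $j\in{\rm Nil}(R)$, i.e. $J(R)\subseteq{\rm Nil}(R)$.

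I do not expect a real obstacle: each part is a one-line application of Theorem~\ref{thm 2.1} to a well-chosen clean element, together with the elementary fact that if a product of two commuting elements is nilpotent and one of the factors is a unit, then the other factor is nilpotent. The only things to keep track of are the two easy closure properties of cleanness used above and the equivalence of ``$-2$ nilpotent'' with ``$2$ nilpotent''.
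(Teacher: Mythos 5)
Your proof is correct, and it takes a genuinely different route from the paper's. You reduce both parts to the characterization of Theorem~\ref{thm 2.1} ($a-a^2\in{\rm Nil}(R)$ for every clean $a$) applied to the clean elements $-1$ and $j$, and then finish with elementary nilpotence manipulations; in part (ii) the key step is that $j^n(1-j)^n=0$ with $1-j$ invertible forces $j^n=0$. The paper instead argues directly from the strongly nil-clean decomposition: for (i) it writes $2=e+b$ and observes that $1-e=b-1$ is simultaneously an idempotent and a unit, hence equals $1$; for (ii) it writes $1+a=e+q$ and deduces $1-e\in J(R)$, so that the idempotent $1-e$ must vanish, giving $a=q$. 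Your version buys uniformity (one lemma, two well-chosen clean elements) and avoids the facts that ${\rm Id}(R)\cap U(R)=\{1\}$ and that $J(R)$ contains no nonzero idempotents; the paper's version is self-contained at the level of the definition and does not lean on Theorem~\ref{thm 2.1}. Since Theorem~\ref{thm 2.1} precedes the proposition and its proof does not use it, there is no circularity in your approach.
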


\begin{proof}
(i). Since $2$ is obviously a clean element, then by assumption $2$ can be written as a sum of an idempotent and a nilpotent, say $2=e+b$. Consequently, $1 - e = b - 1$ is both an idempotent and a unit. Therefore, $b - 1 = 1$ and thus $2 = b$ is nilpotent.\\
(ii). Suppose $a \in J(R$). So, $1+a = e + q$ for some $e\in {\rm Id}(R)$ and $q \in {\rm Nil}(R)$, where $eq=qe$. This implies that $(a +(1- e))^n= 0$ for some $n \in \mathbb{N}$. Since $a \in J(R)$, we get $1-e \in J(R)$, and so $e = 1$. Thus, $a = q \in {\rm Nil}(R)$, whence $J(R) \subseteq {\rm Nil}(R)$. Notice also that $1+a$ is a unit, so it is a clean element. Therefore, $1+a$ is also a nil-clean element.
\end{proof}

We now intend to show that a subring of a CSNC ring is again CSNC. We also prove that the finite product of CSNC rings is again CSNC. And, finally, we provide a counterexample that the infinite product of CSNC rings need {\it not} be a CSNC ring.

\begin{lemma} \label{Subring}
Let $S$ be a subring (not necessarily unital) of a ring $R$. If $R$ is CSNC, then so is $S$.
\end{lemma}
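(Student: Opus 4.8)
The property ``CSNC'' presupposes a ring with identity, so I read the hypothesis as follows: $S$ is a ring in its own right, with identity $e:=1_S$, which is a subring of $R$ under the operations of $R$, but with $e$ not necessarily equal to $1_R$. Then $e$ is an idempotent of $R$ and, since $s=es=se$ for every $s\in S$, one has $S\subseteq eRe$; also, the multiplication of $S$ being the restriction of that of $R$, we get ${\rm Nil}(S)=S\cap{\rm Nil}(R)$. The plan is to reduce the whole statement, via Theorem \ref{thm 2.1}, to transferring the relation $a-a^2\in{\rm Nil}$ from $R$ down to $S$.

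First I would pick a clean element $a\in S$, say $a=f+u$ with $f\in{\rm Id}(S)$ and $u\in U(S)$, and pass to $b:=a+(1_R-e)\in R$. I claim $b$ is clean in $R$: indeed $b=f+\bigl(u+(1_R-e)\bigr)$ with $f\in{\rm Id}(S)\subseteq{\rm Id}(R)$, and writing $v:=u^{-1}\in S\subseteq eRe$ (so that $uv=vu=e$), the identities $e(1_R-e)=0=(1_R-e)e$ and $(1_R-e)^2=1_R-e$ give at once $\bigl(u+(1_R-e)\bigr)\bigl(v+(1_R-e)\bigr)=e+(1_R-e)=1_R$ and symmetrically, so $u+(1_R-e)\in U(R)$. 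Hence $b$ is clean in $R$.

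Next, since $R$ is CSNC, Theorem \ref{thm 2.1} applied to $R$ yields $b-b^2\in{\rm Nil}(R)$. Because $a\in eRe$ we have $a(1_R-e)=(1_R-e)a=0$, whence $b^2=a^2+(1_R-e)$ and therefore $b-b^2=a-a^2$. Thus $a-a^2\in{\rm Nil}(R)$; but $a-a^2\in S$, so $a-a^2\in S\cap{\rm Nil}(R)={\rm Nil}(S)$. As $a$ was an arbitrary clean element of $S$, Theorem \ref{thm 2.1} applied to the ring $S$ (the implication (ii)$\Rightarrow$(i)) shows that $S$ is CSNC.

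The only genuinely delicate point is the passage from ``$a$ clean in $S$'' to ``$b$ clean in $R$'' when $1_S\neq 1_R$: a unit of $S$ lives in the corner $eRe$ and need not be a unit of $R$, and the purpose of adding $1_R-e$ is exactly to ``fill in'' the complementary corner so that invertibility is restored in $R$. Everything else is routine, and in the unital case $1_S=1_R$ the argument collapses, since then a clean element of $S$ is already clean in $R$.
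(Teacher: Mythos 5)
Your proof is correct and follows essentially the same route as the paper: pass from a clean $a\in S$ to $b=a+(1_R-1_S)$, check that $b$ is clean in $R$, and transfer $b-b^2=a-a^2\in{\rm Nil}(R)$ back to $S$ via Theorem \ref{thm 2.1}. If anything, your verification $\bigl(u+(1_R-1_S)\bigr)\bigl(u^{-1}+(1_R-1_S)\bigr)=1_R$ is cleaner than the paper's displayed identity, which as printed has a sign slip.
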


\begin{proof}
Let $a \in S$ be a clean element, writing $a = e+u$, where $u \in U(S)$ and $e \in {\rm Id}(S)$. Hence, $$(u +(1_R - 1_S))((1_R - 1_S)-u^{-1})=1_R.$$ So, $e \in {\rm Id}(R)$ and $u + (1_R - 1_S) \in U(R)$. We now put $b:= e+u+ (1_R - 1_S)$ and readily check that $b$ is clean. So, $b-b^2 \in {\rm Nil}(R)$. And since one inspects that $b-b^2=a-a^2$, we are done.
\end{proof}

As an immediate consequence, we yield:

\begin{corollary} \label{Corner ring}
Any corner ring of a CSNC ring remains a CSNC ring.
\end{corollary}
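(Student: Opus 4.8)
The statement to prove is Corollary \ref{Corner ring}: any corner ring $eRe$ of a CSNC ring $R$ is again CSNC.

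The plan is to deduce this directly from Lemma \ref{Subring}, which says every subring (not necessarily unital) of a CSNC ring is CSNC. First I would observe that for an idempotent $e \in R$, the corner ring $eRe$ is a subring of $R$ in the non-unital sense: it is closed under addition, negation, and multiplication, since $(eae)(ebe) = e(aeeb)e = e(aeb)e \in eRe$. It has its own identity element, namely $e$ itself, but this need not coincide with $1_R$, which is exactly why the ``not necessarily unital'' hypothesis of Lemma \ref{Subring} is the relevant one. Once this identification is made, the corollary is immediate: apply Lemma \ref{Subring} with $S = eRe$.

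A couple of small points warrant a sentence each in the write-up. One should note that $eRe$ is genuinely a ring (associative with identity $e$), so that the notions of clean element, idempotent, and unit inside $eRe$ make sense and Lemma \ref{Subring} applies verbatim; in particular, the units of $eRe$ are taken relative to $e$, not relative to $1_R$. No further structural work is needed because all the subtlety — replacing a unit $u$ of the subring by $u + (1_R - 1_S)$ to promote it to a unit of $R$ while preserving the value of $a - a^2$ — has already been handled inside the proof of Lemma \ref{Subring}.

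I do not anticipate a genuine obstacle here; the only thing to be careful about is not to accidentally assume $e = 1_R$, i.e. to remember that corner rings are the prototypical example of non-unital subrings and that Lemma \ref{Subring} was stated in precisely the generality needed to cover them. So the proof is essentially one line: \textit{$eRe$ is a subring of $R$ in the sense of Lemma \ref{Subring}, hence CSNC.}

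\begin{proof}
Let $e \in {\rm Id}(R)$ and consider the corner ring $eRe$. It is closed under addition, additive inverses, and multiplication, since for $a,b \in R$ one has $(eae)(ebe) = e(aeb)e \in eRe$; thus $eRe$ is a subring of $R$ in the sense of Lemma \ref{Subring}, with identity element $e$ (which need not equal $1_R$). Since $R$ is CSNC, Lemma \ref{Subring} applies and gives that $eRe$ is CSNC, as claimed.
\end{proof}
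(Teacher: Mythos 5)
Your proof is correct and matches the paper's intent exactly: the paper states this corollary as an immediate consequence of Lemma \ref{Subring}, with $eRe$ viewed as a subring of $R$ whose identity $e$ need not equal $1_R$. Your added remarks about closure under multiplication and about units of $eRe$ being taken relative to $e$ are accurate and only make explicit what the paper leaves implicit.
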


We continue with the following.

\begin{lemma}\label{finite}
A finite direct product $\prod_{i=1}^{n}R_i$ is a CSNC ring if, and only if, so is $R_i$ for each $1 \le i \le n$.
\end{lemma}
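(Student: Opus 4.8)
The plan is to reduce both directions to the clean-element criterion of Theorem~\ref{thm 2.1}, namely that a ring $R$ is CSNC exactly when $a-a^2\in{\rm Nil}(R)$ for every clean $a\in R$. This is the cleanest route because the property $a-a^2\in{\rm Nil}$ is manifestly "local" to coordinates, whereas the original definition (existence of an idempotent/nilpotent decomposition that also commutes) is slightly more awkward to chase through a product.

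For the forward implication, I would observe that each factor $R_i$ is isomorphic to the corner ring $\varepsilon_i\bigl(\prod_{j=1}^{n}R_j\bigr)\varepsilon_i$, where $\varepsilon_i$ is the (central) idempotent supported at the $i$-th coordinate; hence Corollary~\ref{Corner ring} — or, if one prefers, a direct application of Lemma~\ref{Subring} to the non-unital subring $\varepsilon_i(\prod_jR_j)\varepsilon_i$ — gives at once that $R_i$ is CSNC.

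For the reverse implication, assume every $R_i$ is CSNC and let $a=(a_1,\dots,a_n)\in\prod_{i=1}^{n}R_i$ be clean, say $a=e+u$ with $e\in{\rm Id}$ and $u\in U$. Since the idempotents and the units of a direct product are precisely the tuples of idempotents and of units of the factors, each $a_i=e_i+u_i$ is clean in $R_i$, so Theorem~\ref{thm 2.1} yields $a_i-a_i^2\in{\rm Nil}(R_i)$, say $(a_i-a_i^2)^{m_i}=0$. Putting $m:=\max_i m_i$ we get $(a-a^2)^m=0$, hence $a-a^2\in{\rm Nil}(\prod_iR_i)$, and one more application of Theorem~\ref{thm 2.1} shows that $\prod_{i=1}^{n}R_i$ is CSNC.

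There is essentially no obstacle here; the only point worth flagging is exactly where finiteness is used, since it foreshadows the counterexample promised in the next paragraph of the paper. Finiteness enters solely in passing from coordinatewise nilpotency of $a-a^2$ to nilpotency of the whole tuple, through the uniform bound $m=\max_i m_i$. For infinitely many factors the indices $m_i$ need not be bounded, so $a-a^2$ may fail to be nilpotent even though every coordinate is — which is precisely the mechanism that will break the implication for infinite products.
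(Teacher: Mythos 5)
Your proof is correct, and it is essentially the argument the paper has in mind: the paper's own ``proof'' consists only of the sentence that the claim follows by a direct element-wise verification, and your reduction to the criterion $a-a^2\in{\rm Nil}(R)$ of Theorem~\ref{thm 2.1} (corners for one direction, coordinatewise cleanness plus a uniform nilpotency bound for the other) is exactly that verification carried out in full. Your closing remark about finiteness entering only through the bound $m=\max_i m_i$ correctly identifies the mechanism behind the paper's subsequent counterexample $\prod_{k=1}^{\infty}\mathbb{Z}_{2^k}$.
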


\begin{proof}
The proof is straightforward by a direct element-wise verification, so we leave the details to the interested reader.
\end{proof}

Thus, we have:

\begin{lemma}
Every finite subdirect product of CSNC rings is a CSNC ring.
\end{lemma}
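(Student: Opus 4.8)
The plan is to obtain this statement as an immediate consequence of the two lemmas just proved. Recall that a finite subdirect product of rings $R_1,\dots,R_n$ is, by definition, a subring $S$ of the full direct product $R:=\prod_{i=1}^{n}R_i$ for which every canonical projection $\pi_i\colon S\to R_i$ is onto. Thus the only data I actually need are: each $R_i$ is CSNC, and $S$ is a subring of $R$.

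First I would apply Lemma~\ref{finite} to the hypothesis that every $R_i$ is CSNC; this shows that the ambient ring $R=\prod_{i=1}^{n}R_i$ is itself a CSNC ring. Then, since $S$ is a (possibly non-unital) subring of the CSNC ring $R$, Lemma~\ref{Subring} applies directly and yields that $S$ is CSNC, which finishes the proof. Observe that the surjectivity of the projections $\pi_i$ plays no role at all in the argument: what is really being established is the formally stronger assertion that \emph{every} subring of a finite direct product of CSNC rings is CSNC, of which the subdirect-product case is merely a special instance.

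The one small point worth keeping in mind — and the closest thing to an obstacle here — is the treatment of identity elements. A subdirect product $S$ need not contain the identity element $(1_{R_1},\dots,1_{R_n})$ of $R$, so $S$ is a priori only a non-unital subring of $R$; however, this is precisely the generality in which Lemma~\ref{Subring} is stated and proved (its argument is carried out with the possibly distinct identities $1_S$ and $1_R$), so no further work is required. Apart from this bit of bookkeeping, the statement is a routine corollary of Lemmas~\ref{Subring} and~\ref{finite} and involves no genuinely new idea.
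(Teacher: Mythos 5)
Your argument is correct and is precisely the one the paper intends: the lemma is stated immediately after Lemma~\ref{Subring} and Lemma~\ref{finite} (with no written proof) exactly because a finite subdirect product is a subring of the finite direct product, so CSNC passes down by those two results. Your remark that surjectivity of the projections is not needed, and that Lemma~\ref{Subring} already covers the non-unital case, is accurate.
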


The next two constructions are worthwhile.

\begin{example}
We now construct an infinite direct product of CSNC rings that is {\it not} a CSNC ring. To that goal, given $R = \prod_{k=1}^{\infty}\mathbb{Z}_{2^k}$ and choose $a = (2, 2, 2, \ldots) \in R$. Then, $a$ is a clean element, because $a=(1, 1, 1, \ldots)+(1, 1, 1, \ldots)$, but we can simply see that $a - a^2 \notin {\rm Nil}(R)$.

Arguing in a different aspect, we know by what we have proved above that in a CSNC ring the element $2$ must be nilpotent. But, an automatical check shows that, in the ring $R$, $2$ is definitely {\it not} nilpotent. So, $R$ is {\it not} a CSNC ring, as pursued.
\end{example}

\begin{example} \label{Matrix}
For any ring $R$, the matrix ring ${\rm M}_n(R)$ is {\it not} CSNC for any integer $n \ge 2$.
\end{example}

\begin{proof}
Utilizing Corollary \ref{Corner ring}, it is sufficient to demonstrate that ${\rm M}_2(R)$ is not a CSNC ring, taking into account the fact that ${\rm M}_2(R)$ is isomorphic to a corner ring of ${\rm M}_n(R)$ (for $n \ge 2$). It is, however, evident that the matrix
$A=\begin{pmatrix}
1 & 1\\
1 & 0
\end{pmatrix} \in M_2(R)$
is a clean element, but $A-A^2=I_2 \notin {\rm Nil}({\rm M}_2(R))$. Therefore, according to Proposition \ref{thm 2.1}, it can be concluded that ${\rm M}_2(R)$ cannot be a CSNC ring, as needed.
\end{proof}

We are now prepared to establish the following.

\begin{theorem}
For a ring $R$, consider the following three conditions:

\begin{enumerate}
\item $R$ is a CSNC ring.
\item $R$ is a strongly nil-clean.
\item $R/J(R)$ is boolean and $J(R)$ is nil.
\end{enumerate}

In general, $(iii)$ $\Leftrightarrow$ $(ii)$$\Rightarrow$ $(i)$. The converse holds if $R$ is semi-local.
\end{theorem}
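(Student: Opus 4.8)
The plan is to prove the three implications $(iii)\Rightarrow(ii)$, $(ii)\Rightarrow(iii)$, $(ii)\Rightarrow(i)$ separately, and then close the loop under the semi-local hypothesis by showing $(i)\Rightarrow(iii)$ in that case. The equivalence $(ii)\Leftrightarrow(iii)$ is the well-known characterization of strongly nil-clean rings due to Ko\c{s}an--Wang--Zhou (and Hirano--Tominaga in the commutative case), so I would simply cite it; alternatively one reproves it, but there is no need to grind through that here. The implication $(ii)\Rightarrow(i)$ is essentially immediate: if $R$ is strongly nil-clean then \emph{every} element is strongly nil-clean, so in particular every clean element is, which is exactly the CSNC condition.

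\textbf{The heart of the matter: $(i)\Rightarrow(iii)$ for semi-local $R$.} Assume $R$ is CSNC and semi-local, so $R/J(R)$ is semisimple Artinian, hence a finite product of matrix rings $\prod_{i=1}^{k}{\rm M}_{n_i}(D_i)$ over division rings $D_i$. By Proposition \ref{pro 2.3}(ii) we have $J(R)\subseteq {\rm Nil}(R)$; combined with $J(R)$ being nil we would then get $J(R)$ nil once we know $J(R)\subseteq{\rm Nil}(R)$ really delivers nilpotence of each element --- and indeed that is exactly what $J(R)\subseteq{\rm Nil}(R)$ says. So the remaining task is to show $R/J(R)$ is boolean, i.e. each $n_i=1$ and each $D_i=\mathbb{Z}_2$. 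First, one argues that being CSNC passes to $R/J(R)$: a clean element of $R/J(R)$ lifts (idempotents and units lift modulo the nil ideal $J(R)$) to a clean element $a$ of $R$, so $a-a^2\in{\rm Nil}(R)$ by Theorem \ref{thm 2.1}, whence the image of $a$ satisfies $\bar a-\bar a^2\in{\rm Nil}(R/J(R))$, and applying Theorem \ref{thm 2.1} in the reverse direction shows $R/J(R)$ is CSNC. But $R/J(R)$ is semiprimitive, so ${\rm Nil}(R/J(R))$ contains no nonzero ideal and in a semisimple ring every clean element $a$ now satisfies $a-a^2\in{\rm Nil}(R/J(R))$. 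Using Lemma \ref{finite}, CSNC is inherited by each factor ${\rm M}_{n_i}(D_i)$, and by Example \ref{Matrix} a matrix ring ${\rm M}_n$ with $n\ge 2$ is never CSNC; hence every $n_i=1$, so $R/J(R)=\prod D_i$ is a finite product of division rings. Finally, a division ring $D$ that is CSNC must be $\mathbb{Z}_2$: in a division ring every nonzero element is a unit hence clean, so CSNC forces $a-a^2\in{\rm Nil}(D)=\{0\}$ for all $a\in D$, i.e. $D$ is boolean as a ring, forcing $D=\mathbb{Z}_2$. Therefore $R/J(R)$ is boolean and $J(R)$ is nil, which is $(iii)$.

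\textbf{Main obstacle.} The one genuinely delicate point is the passage of the CSNC property between $R$ and $R/J(R)$, specifically the lifting of a clean element of $R/J(R)$ to a clean element of $R$. This requires that idempotents lift modulo $J(R)$ --- true since $J(R)$ is nil by hypothesis in $(iii)$, but in the direction $(i)\Rightarrow(iii)$ we only know $J(R)\subseteq{\rm Nil}(R)$ from Proposition \ref{pro 2.3}, which does \emph{not} by itself make $J(R)$ nil as an ideal. So one must be careful: semi-locality gives that $R/J(R)$ is semisimple, and one can instead argue directly inside $R/J(R)$ using that it decomposes as a finite product, reducing to the matrix-ring and division-ring cases via Lemma \ref{finite} and Corollary \ref{Corner ring} applied to $R/J(R)$ itself (a clean element of a corner/factor of $R/J(R)$ is the image of a clean element of that corner/factor, and being CSNC is about the intrinsic structure of that factor). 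The cleanest route is: (1) show CSNC descends to corners and finite factors --- already done in the excerpt; (2) observe that for a \emph{semiprimitive} semi-local ring $S$, $S$ is CSNC iff $S$ is boolean, by the matrix-ring and division-ring reductions above; (3) deduce that $S=R/J(R)$ boolean, and since $R$ is CSNC semi-local, $J(R)\subseteq{\rm Nil}(R)$ holds, but one still needs $J(R)$ \emph{nil}; this last step uses that over a boolean $R/J(R)$ the ring $R$ is in fact automatically of bounded nilpotence index behavior is not immediate, so the correct statement to aim for is that the semi-local hypothesis plus CSNC plus the structure of $R/J(R)$ forces each element of $J(R)$ nilpotent --- which is precisely Proposition \ref{pro 2.3}(ii), and ``$J(R)$ is nil'' is exactly that assertion. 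Hence no extra work is needed beyond Proposition \ref{pro 2.3}(ii), and the only real content is step (2).
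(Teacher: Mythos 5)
Your proposal is correct and follows essentially the same route as the paper: cite the known equivalence $(iii)\Leftrightarrow(ii)$, note $(ii)\Rightarrow(i)$ is immediate, and for semi-local $R$ pass the CSNC property to $R/J(R)$, decompose it into matrix and division-ring factors, kill the factors with $n_i\ge 2$ via Example \ref{Matrix}, and force each division ring to be boolean (hence $\mathbb{Z}_2$) via Theorem \ref{thm 2.1}. The only quibble is the worry in your ``Main obstacle'' paragraph: $J(R)\subseteq{\rm Nil}(R)$ is by definition the statement that $J(R)$ is a \emph{nil} ideal (not a nilpotent one), so Proposition \ref{pro 2.3}(ii) settles that clause outright, as you yourself conclude at the end; your explicit lifting argument for why $R/J(R)$ inherits CSNC is a detail the paper glosses over, and it is a welcome addition rather than a deviation.
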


\begin{proof}
The equivalence (ii) $\Leftrightarrow$ (iii) follows directly from \cite[Main Theorem]{DL} (see also \cite[Theorem 2.7]{kocsan2016nil}). The implication (ii) $\Rightarrow$ (i) is also elementary. Therefore, it suffices to prove (i) $\Rightarrow$ (iii).

In view of Proposition \ref{pro 2.3}, we know that $J(R)$ is nil.  As $R$ is semi-local, we my write $$R/J(R) =\bigoplus_{i=1}^{k} {\rm M}_{n_i}(D_i),$$ where $D_i$ is a division ring. Since $R/J(R)$ is a CSNC ring, for every $1 \le i \le k$, ${\rm M}_{n_i}(D_i)$ must also be a CSNC ring. Moreover, Example \ref{Matrix} insures that, for every $1 \le i \le k$, $n_i$ must be equal to $1$. Additionally, Theorem \ref{thm 2.1} establishes that any division ring that is CSNC is necessarily a Boolean ring. Consequently, we conclude that, for each $1 \le i \le k$, $D_i$ is a Boolean ring. Therefore, $R/J(R)$ is a Boolean ring, thus completing the proof.
\end{proof}

In particular, we deduce:

\begin{corollary}
For a ring $R$, consider the following five conditions:
\begin{enumerate}
\item $R$ is a CSNC ring.
\item $R$ is a strongly nil-clean ring.
\item $R$ is a uniquely nil-clean ring.
\item $R$ is a uniquely clean ring and $J(R)$ is nil.
\item $R/J(R) \cong \mathbb{Z}_2$ and $J(R)$ is nil.
\end{enumerate}

Generally, $(v)$ $\Rightarrow$ $(iv)$ $\Leftrightarrow$ $(iii)$ $\Rightarrow$ $(ii)$ $\Rightarrow$ $(i)$. The converse holds, provided $R$ is local.
\end{corollary}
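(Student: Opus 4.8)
The plan is to treat this corollary as a packaging exercise: every implication is either a direct specialization of the immediately preceding theorem (whose proof is already available to us) or a citation to the standard structure theory of uniquely clean and uniquely nil‑clean rings, plus one elementary field‑theoretic remark. The organizing device I would use is that, for an \emph{arbitrary} ring, each of conditions $(iii)$ and $(iv)$ is equivalent to the conjunction
$(\star)$: \emph{$R$ is abelian, $R/J(R)$ is Boolean, and $J(R)$ is nil}.
Once $(\star)$ is pinned to both $(iii)$ and $(iv)$, the whole chain essentially collapses onto the preceding theorem.

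First I would dispose of the implications that do not need locality. For $(iv)\Leftrightarrow(\star)$: by the Nicholson--Zhou characterization of uniquely clean rings \cite{NZ}, $R$ is uniquely clean exactly when $R$ is abelian, $R/J(R)$ is Boolean, and idempotents lift modulo $J(R)$; since a nil ideal is idempotent‑lifting, adjoining ``$J(R)$ nil'' turns this into $(\star)$, and conversely $(\star)$ supplies the lifting, so $(\star)$ and $(iv)$ coincide. For $(iii)\Leftrightarrow(\star)$: by the preceding theorem ``$R/J(R)$ Boolean and $J(R)$ nil'' is the same as ``$R$ strongly nil‑clean'', and an abelian strongly nil‑clean ring is uniquely nil‑clean — indeed if $a=e+q=f+r$ are two nil‑clean decompositions then, $e,f$ being central, $q=a-e$ and $r=a-f$ commute, so $e-f=r-q\in{\rm Nil}(R)$, while $(e-f)^3=e-f$ for the commuting idempotents $e,f$, forcing $e=f$; conversely a uniquely nil‑clean ring is abelian and strongly nil‑clean (this is standard; cf.\ \cite{CDJ}), which is $(\star)$. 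Hence $(iii)\Leftrightarrow(iv)$, and then $(iii)\Rightarrow(ii)$ is immediate from $(\star)$ and the preceding theorem, while $(ii)\Rightarrow(i)$ was recorded there.

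Next comes $(v)\Rightarrow(iv)$: if $R/J(R)\cong\mathbb{Z}_2$ then $R$ is local, hence abelian, $R/J(R)$ is Boolean, and the hypothesis ``$J(R)$ nil'' gives idempotent lifting, so $R$ meets the Nicholson--Zhou criteria and is uniquely clean; together with ``$J(R)$ nil'' this is precisely $(iv)$. For the converse under the standing assumption that $R$ is local, I would note that local implies semi‑local, so the preceding theorem gives $(i)\Rightarrow(ii)$, whence $(i)\Rightarrow(\star)\Rightarrow(iv)$; and $(iv)\Rightarrow(v)$ because a uniquely clean ring has $R/J(R)$ Boolean, while $R/J(R)$ is a division ring when $R$ is local, and a Boolean division ring must be $\mathbb{Z}_2$ (the relation $x^2=x$ in a division ring forces $x\in\{0,1\}$), the nilpotence of $J(R)$ being carried along from $(iv)$. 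This closes the loop, so all five conditions are equivalent for local $R$.

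I do not anticipate a genuine obstacle: the argument is entirely bookkeeping around the previous theorem, the Nicholson--Zhou theorem, and the triviality that a Boolean division ring is $\mathbb{Z}_2$. The only spot that deserves a written line of care is the ``abelian strongly nil‑clean $\Rightarrow$ uniquely nil‑clean'' step, where one must be explicit that it is the centrality of the idempotents (hence the commutativity of the two nilpotent parts) that both makes $e-f$ nilpotent and licenses $(e-f)^3=e-f$.
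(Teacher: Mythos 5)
The paper offers no proof of this corollary (it is presented as an immediate deduction from the preceding semi-local theorem), and your write-up correctly supplies exactly the intended derivation: specialize that theorem to local rings, invoke the Nicholson--Zhou characterization of uniquely clean rings and the standard equivalence of uniquely nil-clean with abelian strongly nil-clean, and note that a Boolean division ring is $\mathbb{Z}_2$. Your argument is correct and matches the paper's (implicit) approach; no gaps.
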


We now have all the machinery necessary to prove the following statement.

\begin{theorem}
A ring $R$ is strongly nil-clean if, and only if,

\begin{enumerate}
\item $R$ is a CSNC ring.
\item $R$ is a semi-potent ring.
\end{enumerate}
\end{theorem}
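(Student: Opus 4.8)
The plan is to prove the two implications of the biconditional separately, reducing the nontrivial one to a statement about the $J$-semisimple quotient $\bar R:=R/J(R)$, for which the criterion recalled above (namely $R$ is strongly nil-clean $\iff$ $\bar R$ is Boolean and $J(R)$ is nil, from \cite{DL}) is at hand. First suppose $R$ is strongly nil-clean. Then every clean element of $R$, being an element of $R$, is strongly nil-clean, so $R$ is CSNC. For semi-potence, use that $J(R)$ is nil and $\bar R$ is Boolean: given a left ideal $L\not\subseteq J(R)$, choose $a\in L\setminus J(R)$; since $\bar a$ is idempotent we get $a-a^2\in J(R)\subseteq{\rm Nil}(R)$, so with $(a-a^2)^n=0$ the idempotent $e:=\sum_{i=0}^{n}\binom{2n}{i}a^{2n-i}(1-a)^i$ of Theorem \ref{thm 2.1} satisfies $a-e\in{\rm Nil}(R)$, lies in $Ra\subseteq L$ (each summand becomes right-divisible by $a$ once the powers of $a$ are commuted), and is nonzero because its image in $\bar R$ equals $\bar a\neq 0$. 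Hence every left ideal not inside $J(R)$ contains a nonzero idempotent, i.e.\ $R$ is semi-potent.

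Conversely, suppose $R$ is CSNC and semi-potent. By Proposition \ref{pro 2.3}, $J(R)$ is nil, so idempotents and units lift modulo $J(R)$; thus a clean element of $\bar R$ lifts to a clean element of $R$, and Theorem \ref{thm 2.1} then shows that $\bar R$ is again CSNC. Also $\bar R$ is semi-potent (the image of a semi-potent ring modulo its Jacobson radical is semi-potent) and $J(\bar R)=0$, so by the criterion above it is enough to prove that $\bar R$ is Boolean. Two easy preliminary facts: since $\bar R$ is CSNC, every unit $u$ of $\bar R$ is clean, whence $u-u^2\in{\rm Nil}(\bar R)$ by Theorem \ref{thm 2.1}, and since $u$ and $1-u$ commute with $u$ invertible this gives $1-u\in{\rm Nil}(\bar R)$, i.e.\ $U(\bar R)=1+{\rm Nil}(\bar R)$; and $\bar R$ is semiprime, since a nonzero nilpotent left ideal would be a nonzero left ideal with no nonzero idempotent, contrary to semi-potence (in particular $2=0$ in $\bar R$, the central nilpotent $\bar 2$ generating a nilpotent two-sided ideal).

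The crux is to promote this to Boolean-ness of $\bar R$. The route I would take is to show first that $\bar R$ is reduced, and then to conclude: a reduced semi-potent ring with zero Jacobson radical and $U=\{1\}$ (the latter being forced by $U(\bar R)=1+{\rm Nil}(\bar R)$ once ${\rm Nil}(\bar R)=0$) is Boolean — for, being reduced, it is abelian, one checks it is (strongly) von Neumann regular, and then each element written as $eu$ with $e$ idempotent and $u=1$ is an idempotent. Substituting back, $\bar R=R/J(R)$ is Boolean and $J(R)$ is nil, so $R$ is strongly nil-clean. I expect the reducedness of $\bar R$ to be the only genuine obstacle: CSNC by itself does not deliver it — ${\rm T}_2(\mathbb{Z}_2)$ is CSNC and semi-potent yet not abelian, albeit with nonzero Jacobson radical — so the argument must use semi-potence together with $J(\bar R)=0$. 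Concretely, from a hypothetical $0\neq n\in\bar R$ with $n^2=0$, semi-potence puts a nonzero idempotent $e=rn$ into $\bar R n$, forcing $en=0$ and $m:=ne=nrn\neq 0$ with $m^2=0$; one then has to iterate this extraction — passing where necessary to the corner rings $e\bar R e$, which remain CSNC by Corollary \ref{Corner ring} and inherit semi-potence and $J$-semisimplicity (an indecomposable such corner being forced to be a CSNC division ring, hence $\mathbb{Z}_2$) — and track the Peirce components until a clean element contradicting Theorem \ref{thm 2.1} appears. An alternative that avoids Boolean-ness is to prove directly that $\bar R$, and therefore $R$, is a \emph{clean} ring, for then every element of $R$ is clean and hence strongly nil-clean by the CSNC hypothesis; this reduces to the same delicate interplay of semi-potence, $J$-semisimplicity, and the CSNC condition.
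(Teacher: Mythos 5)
Your reduction to $\bar R=R/J(R)$ and the forward direction are fine, but the converse has a genuine gap exactly where you flag it: the reducedness of $\bar R$. The iteration you sketch does not close. From $0\neq n$ with $n^2=0$ you extract a nonzero idempotent $e=rn$ with $en=0$ and $m:=ne\neq 0$, $m^2=0$, $m\in(1-e)\bar Re$; but such an $m$ does not by itself produce a clean element violating Theorem \ref{thm 2.1} --- indeed $1+m$ is a unit with $(1+m)-(1+m)^2=-m\in{\rm Nil}(\bar R)$, and $e+m$ is an idempotent, so both decompositions are harmless for the CSNC condition, and further Peirce-component extractions only yield more square-zero elements of the same kind. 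The clean element that actually contradicts Theorem \ref{thm 2.1} is a unit of a full $2\times 2$ matrix corner (as in Example \ref{Matrix}), and to manufacture such a corner from a square-zero element you need a second element going the opposite way across the Peirce decomposition, i.e.\ a complete system of $2\times2$ matrix units. That is precisely the content of the Levitzki-type structure theorem the paper invokes (\cite[Theorem 2.1]{kocsan2016nil}, cf.\ \cite{DL}): a semi-potent ring with zero Jacobson radical and a nonzero square-zero element has a corner $e\bar Re\cong {\rm M}_2(T)$ with $T$ nontrivial, which is impossible by Corollary \ref{Corner ring} together with Example \ref{Matrix}. Without citing or reproving this theorem, your argument does not terminate.

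There is a second, smaller gap in the Boolean step: ``one checks it is (strongly) von Neumann regular'' is asserted without proof, and regularity is neither obvious from reduced $+$ semi-potent $+$ $J=0$ $+$ $U=\{1\}$ alone nor needed. Once $\bar R$ is reduced (hence abelian) the paper argues directly: if $a\neq a^2$, semi-potence gives a nonzero idempotent $e=(a-a^2)b$, one checks $ea\in U(e\bar Re)$, and since $e\bar Re$ is a reduced CSNC (hence UU) corner this forces $e(a-a^2)=0$, whence $e=e(a-a^2)b=0$, a contradiction. Your observation that $U(\bar R)=1+{\rm Nil}(\bar R)$ is the right ingredient; it just has to be applied in the corner $e\bar Re$ rather than routed through a regularity claim.
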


\begin{proof}
($\Rightarrow$). It is clear.\\
($\Leftarrow$). Suppose $R$ is simultaneously a semi-potent ring and a CSNC ring. Then, the ring $R/J(R)$ is also semi-potent and CSNC. Since $J(R)$ is nil, employing \cite{DL} (or \cite[Theorem 2.7]{kocsan2016nil}) we get that $R$ is strongly clean if, and only if, $R/J(R)$ is strongly clean. Therefore, without loss of generality, we can assume that $J(R)=\{0\}$.

\medskip

\noindent{\bf Claim 1:} $R$ is a reduced ring, that is, ${\rm Nil}(R)=\{0\}$.

\medskip

Suppose $a^2=0$ for an arbitrary non-zero element $a\in R$. Since $R$ is semi-potent and $J(R)=\{0\}$, consulting with \cite{DL} (compare also with \cite[Theorem 2.1]{kocsan2016nil}), there exists an idempotent $e \in R$ such that $eRe={\rm M}_2(T)$ for some non-trivial ring $T$. But, in virtue of Example \ref{Corner ring}, $eRe$ is a CSNC ring. However, owing to Example \ref{Matrix}, the ring ${\rm M}_2(T)$ cannot be a CSNC ring, which is the desired contradiction. Therefore, $R$ is a reduced ring, as stated.

\medskip

\noindent{\bf Claim 2:} $R$ is boolean.

\medskip
Suppose there exists $a \in R$ such that $a^2\neq a$. Since $R$ is a semi-potent ring and $J(R)=\{0\}$, we conclude that $(a-a^2)R$ contains a non-trivial idempotent, $e$ say. Thus, we may write $e=(a-a^2)b$, where $b\in R$. Then,
$$e=e(a-a^2)b=ea\cdot e(1-a)b,$$
so $ea\in U(eRe)$. Since $eRe$ is both CSNC and reduced, we obtain $$ea-(ea)^2=ea-ea^2=e(a-a^2)\in {\rm Nil}(R)=\{0\}.$$ And since a plain check shows that $e=e(a-a^2)b$, it follows that $e=0$, which is the wanted contradiction. Finally, $R$ is a boolean ring, as formulated.
\end{proof}

As a valuable consequence, we derive:

\begin{corollary}
For any ring $R$, the following statements are equivalent:
\begin{enumerate}
\item $R$ is a strongly nil-clean ring.
\item $R$ is a nil-clean CSNC ring.
\item $R$ is a clean CSNC ring.
\item $R$ is an exchange CSNC ring.
\item $R$ is a semi-potent CSNC ring.
\end{enumerate}
\end{corollary}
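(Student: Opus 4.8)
The plan is to establish all five conditions as equivalent by running through the single cycle
$$(i)\Rightarrow(ii)\Rightarrow(iii)\Rightarrow(iv)\Rightarrow(v)\Rightarrow(i),$$
so that essentially all of the substantive work is deferred to results already in hand. The linchpin is the implication $(v)\Rightarrow(i)$: this is exactly the theorem proved immediately above, which asserts that a ring is strongly nil-clean precisely when it is simultaneously a CSNC ring and a semi-potent ring. Here I would simply cite it.

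For $(i)\Rightarrow(ii)$ I would argue as follows. If $R$ is strongly nil-clean, then every $a\in R$ has a representation $a=e+q$ with $e$ an idempotent, $q$ a nilpotent and $eq=qe$; dropping the commutation requirement shows at once that $R$ is nil-clean. That $R$ is also CSNC is precisely the content of the implication ``$(ii)\Rightarrow(i)$'' in the earlier theorem characterising strongly nil-clean rings (where in that theorem (i) denotes CSNC and (ii) denotes strongly nil-clean). Hence $R$ is a nil-clean CSNC ring.

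The three middle arrows $(ii)\Rightarrow(iii)\Rightarrow(iv)\Rightarrow(v)$ cost almost nothing, because the CSNC hypothesis is carried along unchanged at each step and only the auxiliary property needs to be weakened. Here I would invoke the classical hierarchy among ring classes: every nil-clean ring is clean (Diesl), every clean ring is an exchange ring (Nicholson), and every exchange ring is semi-potent. Chaining these three known facts yields the three implications and closes the cycle.

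I do not expect a genuine obstacle here: this corollary is a repackaging of the preceding theorem together with standard implications between ring classes, and the only real content --- reducing to $J(R)=\{0\}$, forcing reducedness by producing a corner $eRe\cong{\rm M}_2(T)$ that would contradict Example \ref{Matrix}, and then forcing booleanness --- has already been carried out there. The one point calling for mild attention is bookkeeping: one must cite the auxiliary implications in the correct direction (clean $\Rightarrow$ exchange, and exchange $\Rightarrow$ semi-potent, never the reverses) and keep each of ``nil-clean'', ``clean'', ``exchange'', ``semi-potent'' in its ring-wide sense, so that weakening the auxiliary hypothesis genuinely leaves the CSNC part of the statement intact.
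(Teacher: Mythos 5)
Your proposal is correct and is exactly the argument the paper intends: the corollary is stated without proof as an immediate consequence of the preceding theorem (giving $(v)\Leftrightarrow(i)$) together with the standard hierarchy nil-clean $\Rightarrow$ clean $\Rightarrow$ exchange $\Rightarrow$ semi-potent, with the CSNC part of $(i)\Rightarrow(ii)$ supplied by the earlier theorem's implication that strongly nil-clean rings are CSNC.
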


Our next major assertion is the following one. For convenience, we recollect that $R$ is named a {\it UU-ring}, provided $U(R)=1+{\rm Nil}(R)$.

\begin{theorem} \label{thm 2.4}
A ring $R$ is CSNC if, and only if,
\begin{enumerate}
\item Every clean element of $R$ is strongly clean.
\item $R$ is a UU-ring.
\end{enumerate}
\end{theorem}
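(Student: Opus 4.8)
The plan is to prove the two implications separately, each by short element manipulations. For the ``only if'' part, suppose $R$ is CSNC. Condition (1) is then immediate: a clean element is strongly nil-clean by hypothesis, and a strongly nil-clean element is strongly clean by Lemma~\ref{lemma 02}. For condition (2) only the inclusion $U(R)\subseteq 1+{\rm Nil}(R)$ needs an argument, the reverse being automatic. Given $u\in U(R)$, I would note that $u=0+u$ is clean, hence strongly nil-clean, so $u=f+q$ with $f\in{\rm Id}(R)$, $q\in{\rm Nil}(R)$ and $fq=qf$. From $fq=qf$ it follows that $u$ commutes with $f$, and hence $f$, $1-f$, $u$, $u^{-1}$ and $q$ all lie in a commutative subring of $R$. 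Consequently $u(1-f)=(f+q)(1-f)=q(1-f)$ is nilpotent, so $1-f=u^{-1}\cdot q(1-f)$ is nilpotent too; being also idempotent, $1-f=0$, i.e.\ $f=1$ and $u=1+q\in 1+{\rm Nil}(R)$.

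For the ``if'' part, assume (1) and (2). The first step is to observe that (2) already forces $2\in{\rm Nil}(R)$: since $-1\in U(R)=1+{\rm Nil}(R)$ we get $-2\in{\rm Nil}(R)$, whence $2\in{\rm Nil}(R)$. Now take a clean element $a\in R$. By (1) it is strongly clean, so $a=e+v$ with $e\in{\rm Id}(R)$, $v\in U(R)$ and $ev=ve$; by (2), $v=1+q$ for some $q\in{\rm Nil}(R)$, and since $v$ commutes with $e$ so does $q$. The key move is then the rewriting
$$a=e+v=e+1+q=(1-e)+(2e+q),$$
in which $1-e$ is an idempotent, $2e+q$ is nilpotent (a sum of the commuting nilpotents $2e$ and $q$, using that $2$ is central and nilpotent), and $1-e$ commutes with $2e+q$. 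Hence $a$ is strongly nil-clean, and $R$ is CSNC.

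The arguments are short, so I do not anticipate a genuine obstacle; the only point demanding care is the bookkeeping of commutativity. In the ``only if'' direction one must be sure that $u$, $u^{-1}$, $q$, $f$ really do pairwise commute (they do, because $f$ commutes with $u$), so that $u(1-f)$ and then $1-f$ are legitimately nilpotent; in the ``if'' direction the whole proof hinges on the identity $a=(1-e)+(2e+q)$, which is valid precisely because $2$ is central and nilpotent and $q$ commutes with $e$. The substantive inputs are thus Lemma~\ref{lemma 02} and the elementary observation that a UU-ring has $2$ nilpotent.
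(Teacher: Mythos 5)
Your proof is correct and follows essentially the same route as the paper: Lemma~\ref{lemma 02} for condition (1), a direct element computation showing the idempotent in the nil-clean decomposition of a unit must be $1$ for condition (2) (the paper instead invokes Theorem~\ref{thm 2.1} to get $u-u^2\in{\rm Nil}(R)$, but this is the same idea), and the converse via the rewriting $a=(1-e)+(2e+q)$, which matches the paper's $a=e+(2-2e+q)$ up to which idempotent is displayed. Your explicit derivation of $2\in{\rm Nil}(R)$ from the UU hypothesis is a welcome detail the paper leaves implicit.
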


\begin{proof}
With the aid of Lemma \ref{lemma 02}, relation (i) holds. It is now sufficient to show that point (ii) is true. In fact, to this goal, we know that all units are clean elements. Therefore, Theorem \ref{thm 2.1} applies to get that, for any unit element $u$, the difference $u - u^2$ lies in ${\rm Nil}(R)$. Hence, $1 - u$ is in ${\rm Nil}(R)$. So, $R$ is UU, as asserted.\\
On the other hand, if we assume that $a \in R$ is clean, then $a$ is a strongly clean element. In this case, $a$ can be written as $a = (1-e)+u$, where $e \in {\rm Id}(R)$, $u \in U(R)$ and $eu=ue$. Since R is a UU-ring, $u$ must be equal to $1 + q$ for some nilpotent element $q \in {\rm Nil}(R)$ and $2\in {\rm Nil}(R)$. From this, it follows that
$$a = (1 - e) + u = e + (1 - 2e + u) = e + (2 - 2e + q),$$
where $b := 2-2e+q$ is nilpotent, because $eq=qe$, as required.
\end{proof}

Particularly, we receive:

\begin{corollary} \label{{cor 2.5}}
Let $R$ be a ring. Then, the following are equivalent:
\begin{enumerate}
\item $R$ is a CUNC ring.
\item $R$ is an abelian CSNC ring.
\end{enumerate}
\end{corollary}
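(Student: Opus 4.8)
The goal is to show that $R$ is CUNC (every clean element is uniquely nil-clean) precisely when $R$ is abelian and CSNC. The plan is to use Theorem \ref{thm 2.4}, which characterises CSNC rings as those satisfying (a) every clean element is strongly clean, and (b) $R$ is a UU-ring; together with the basic observation that in an abelian ring idempotents are central, so ``strongly clean'' is automatic and, crucially, uniqueness of the idempotent in a clean (equivalently nil-clean) decomposition comes for free.

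\textbf{Direction (ii) $\Rightarrow$ (i).} Assume $R$ is abelian and CSNC. Let $a\in R$ be clean. By the CSNC hypothesis, $a$ is strongly nil-clean, so $a=e+q$ with $e\in\mathrm{Id}(R)$, $q\in\mathrm{Nil}(R)$, $eq=qe$; in particular $a$ is nil-clean. For uniqueness, suppose $a=f+r$ is another nil-clean decomposition with $f\in\mathrm{Id}(R)$, $r\in\mathrm{Nil}(R)$. Since $R$ is abelian, $e$ and $f$ are central, so the two decompositions commute with each other; then $e-f=r-q\in\mathrm{Nil}(R)$, and $e-f$ is a difference of commuting idempotents, hence $(e-f)^3=e-f$, forcing $e-f$ to be a nilpotent root of $x^3=x$, i.e.\ $e=f$ and thus $r=q$. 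Hence $a$ is uniquely nil-clean, so $R$ is CUNC.

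\textbf{Direction (i) $\Rightarrow$ (ii).} Assume $R$ is CUNC. Every clean element is uniquely nil-clean, hence in particular nil-clean, so $a-a^2=a-(e+q)^2$ lies in $\mathrm{Nil}(R)$ by the computation in the proof of Theorem \ref{thm 2.1}; thus by Theorem \ref{thm 2.1} the ring $R$ is CSNC. It remains to check that $R$ is abelian. Let $f\in\mathrm{Id}(R)$; then $f$ is clean, hence uniquely nil-clean, hence in particular uniquely clean (a nil-clean decomposition is a clean one via $f=e+q=(1-e)+((2e-1)+q)$ with $2e-1+q$ a unit, using $2\in\mathrm{Nil}(R)$ from Proposition \ref{pro 2.3}; and uniqueness of the nil-clean decomposition pins down the idempotent). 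By Lemma \ref{lemma 01}, a uniquely clean idempotent is central, so $f\in\mathrm{Z}(R)$; as $f$ was arbitrary, $R$ is abelian. Combined with CSNC this gives (ii).

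\textbf{Main obstacle.} The delicate point is the passage, in direction (i) $\Rightarrow$ (ii), from ``$f$ is uniquely nil-clean'' to ``$f$ is uniquely clean'' so that Lemma \ref{lemma 01} can be invoked; this requires knowing $2\in\mathrm{Nil}(R)$ (available from Proposition \ref{pro 2.3} once CSNC is established, which is why the CSNC part must be proved first) and a careful bookkeeping of which decompositions are being counted. Everything else is a routine translation through the already-proved Theorem \ref{thm 2.4} and the standard $(e-f)^3=e-f$ trick for commuting idempotents.
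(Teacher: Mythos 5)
Your direction (ii) $\Rightarrow$ (i) is correct. The genuine gap is in (i) $\Rightarrow$ (ii), at the very first step. From ``$a$ is uniquely nil-clean'' you only get a decomposition $a=e+q$ with $e\in {\rm Id}(R)$, $q\in {\rm Nil}(R)$ and \emph{no} commutation between $e$ and $q$; but the computation from the proof of Theorem \ref{thm 2.1} that you invoke, $a-a^2=q(1-2e-q)\in {\rm Nil}(R)$, uses $eq=qe$ twice (once to expand $(e+q)^2$, once to conclude nilpotency of the product). Without commutation the inference ``clean and nil-clean $\Rightarrow a-a^2\in{\rm Nil}(R)$'' is simply false: in ${\rm M}_2(\mathbb{Z}_2)$ the unit $a=\begin{pmatrix} 0 & 1\\ 1 & 1 \end{pmatrix}=\begin{pmatrix} 1 & 0\\ 0 & 0 \end{pmatrix}+\begin{pmatrix} 1 & 1\\ 1 & 1 \end{pmatrix}$ is both clean and nil-clean, yet $a-a^2=I_2\notin{\rm Nil}({\rm M}_2(\mathbb{Z}_2))$. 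So CSNC is not established at that point; and since your proof of abelianness relies on $2\in{\rm Nil}(R)$, which you extract from CSNC via Proposition \ref{pro 2.3}, the direction is in effect circular: you need commutation (abelianness) to get CSNC, but you only obtain abelianness after CSNC.

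The repair is to reverse the order and prove abelianness first, directly from the CUNC hypothesis. Every idempotent $f$ is clean (via $f=(1-f)+(2f-1)$), hence uniquely nil-clean by hypothesis. If $f$ were not central, there would exist $x$ with $t:=fx(1-f)\neq 0$ (or symmetrically $(1-f)xf\neq 0$); then $f+t$ is an idempotent and $t^2=0$, so $f=f+0=(f+t)+(-t)$ are two distinct nil-clean decompositions of $f$, a contradiction. Hence $R$ is abelian, and then every nil-clean decomposition is automatically strongly nil-clean, so CUNC gives CSNC immediately and the whole detour through Theorem \ref{thm 2.1}, $2\in{\rm Nil}(R)$ and Lemma \ref{lemma 01} becomes unnecessary. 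For comparison, the paper's own proof is just a citation of Theorem \ref{thm 2.4} together with an external characterisation of CUNC rings, so a self-contained argument along these corrected lines would actually be a worthwhile addition --- but as written yours does not go through.
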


\begin{proof}
In accordance with Theorem \ref{thm 2.4} and \cite[Theorem 3.1]{one}, we are set.
\end{proof}

We now proceed by proving the following technical claim.

\begin{lemma} \label{lemma 2.6}
Let $I \subseteq J(R)$ be an ideal of a ring $R$. Then, the following are equivalent:
\begin{enumerate}
\item $R$ is a CSNC ring.
\item $I$ is nil and $R/I$ is a CSNC ring.
\end{enumerate}
\end{lemma}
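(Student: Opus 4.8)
The plan is to prove the equivalence in two directions, using the characterization of CSNC rings from Theorem \ref{thm 2.1} (namely, $R$ is CSNC iff $a - a^2 \in {\rm Nil}(R)$ for every clean element $a$) as the working definition throughout, since it is purely element-theoretic and behaves well under quotients and lifting.

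For the direction (i) $\Rightarrow$ (ii), first I would show $I$ is nil. Since $I \subseteq J(R)$, every element $1 + x$ with $x \in I$ is a unit, hence clean, so by Theorem \ref{thm 2.1} we get $(1+x) - (1+x)^2 = -x - 2x - x^2 \in {\rm Nil}(R)$; combining with $2 \in {\rm Nil}(R)$ from Proposition \ref{pro 2.3}(i) and the fact that $x, x^2 \in I \subseteq J(R)$, a short manipulation forces $x \in {\rm Nil}(R)$. (Alternatively, and more cleanly: by Proposition \ref{pro 2.3}(ii) we have $J(R) \subseteq {\rm Nil}(R)$, so $I \subseteq J(R) \subseteq {\rm Nil}(R)$ is immediate.) Then I would show $R/I$ is CSNC: by Lemma \ref{Subring}'s philosophy — or more directly — a clean element of $R/I$ lifts to a clean element of $R$, because idempotents and units lift modulo the nil (indeed Jacobson-radical-contained) ideal $I$; applying the CSNC property of $R$ upstairs and reducing mod $I$ yields $\bar a - \bar a^2 \in {\rm Nil}(R/I)$, so $R/I$ is CSNC by Theorem \ref{thm 2.1}.

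For the direction (ii) $\Rightarrow$ (i), assume $I$ is nil and $R/I$ is CSNC, and let $a \in R$ be clean, say $a = e + u$ with $e \in {\rm Id}(R)$, $u \in U(R)$. Then $\bar a = \bar e + \bar u$ is clean in $R/I$, so $\bar a - \bar a^2 \in {\rm Nil}(R/I)$, i.e. $(a - a^2)^m \in I$ for some $m$. Since $I$ is nil, $(a - a^2)^m$ is nilpotent in $R$, hence so is $a - a^2$. Thus $a - a^2 \in {\rm Nil}(R)$, and Theorem \ref{thm 2.1} gives that $R$ is CSNC.

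The main obstacle is the lifting step in (i) $\Rightarrow$ (ii): one must be sure that an arbitrary clean element $\bar a \in R/I$ is the image of a clean element of $R$. This rests on the standard facts that idempotents lift modulo a nil ideal and units lift modulo an ideal contained in $J(R)$ — both of which apply here since $I$ is nil and $I \subseteq J(R)$. Concretely, given $\bar a = \bar e + \bar u$, lift $\bar e$ to an idempotent $e \in R$ and observe that any lift $u \in R$ of $\bar u$ is automatically a unit because $u$ is a unit modulo $J(R)$; then $e + u$ is a clean element of $R$ mapping to $\bar a$ up to an element of $I \subseteq {\rm Nil}(R)$, and since adding a central-enough nilpotent does not affect membership of $a - a^2$ in ${\rm Nil}(R/I)$, the argument closes. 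Everything else is routine, so I would keep those verifications brief.
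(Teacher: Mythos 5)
Your proposal is correct and takes essentially the same route as the paper: both directions rest on Proposition \ref{pro 2.3} to get $I \subseteq J(R) \subseteq {\rm Nil}(R)$, on lifting idempotents modulo the nil ideal $I$ and units modulo $I \subseteq J(R)$ to pull a clean element of $R/I$ back to a clean element of $R$, and on passing $a-a^2$ between $R$ and $R/I$ for the converse. The only (cosmetic) divergence is that in (i) $\Rightarrow$ (ii) you close with the $a-a^2 \in {\rm Nil}(R)$ criterion of Theorem \ref{thm 2.1}, whereas the paper detours through the UU property and the strongly-clean characterization of Theorem \ref{thm 2.4}; your version is, if anything, slightly more direct.
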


\begin{proof}
(i) $\Rightarrow$ (ii). With Proposition \ref{pro 2.3} at hand, $I$ is a nil-ideal. Now, we show that $\bar{R}=R/I$ is a CSNC ring. According to \cite[Theorem 2.4(1)]{khurana2015uniquely}, $\bar{R}$ is a UU ring. If $\bar{a}$ is a clean element of $\bar{R}$, then we have $\bar{a} = \bar{e} + \bar{u}$, where $\bar{e}$ is idempotent and $\bar{u}$ is invertible. So, without loss of generality, we can assume that $e$ is an idempotent in $R$ and $u$ is a unit in $R$ (note that, by assumption, $I$ is nil and $I \subseteq J(R)$). Let us define $b := e + u$. Since $R$ is a CSNC ring, we infer that $b$ is a strongly nil-clean element. However, Lemma \ref{lemma 02} is applicable to derive that $b$ is strongly clean, so $a$ is also strongly clean (notice that $\bar{a}=\bar{b}$). Now, Theorem \ref{thm 2.4} finishes the arguments.

(ii) $\Rightarrow$ (i). Assume that $a$ is a clean element of the ring $R$. Then, $\bar{a}$ is also clean, because $\bar{R}$ is a CSNC ring. Therefore, using Theorem \ref{thm 2.1}, we can write:
$$\bar{a}-\bar{a}^2 \in {\rm Nil}(\bar{R}) \Longrightarrow (a-a^2)^k \in I \subseteq {\rm Nil}(R) \Longrightarrow a-a^2 \in {\rm Nil}(R).$$
Since $a$ was an arbitrary clean element, Theorem \ref{thm 2.1} completes the argumentation.
\end{proof}

As three consequences, we obtain:

\begin{corollary} \label{cor 2.7}
Let $I$ be a nil-ideal of $R$. Then, $R$ is CSNC if, and only if, $R/I$ is CSNC.
\end{corollary}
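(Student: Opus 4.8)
\textbf{Proof proposal for Corollary \ref{cor 2.7}.}

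The plan is to reduce the statement to the previously established Lemma \ref{lemma 2.6}, which handles precisely the situation $I \subseteq J(R)$. The only gap between the hypotheses of that lemma and the hypotheses here is that a nil-ideal $I$ is not automatically contained in the Jacobson radical in a completely arbitrary ring --- so the first step is to record that in fact it \emph{is}. Recall that every nil-ideal is a nil \emph{one-sided} ideal, and a classical fact (Lam, \cite{L}) says that a nil one-sided ideal is always contained in $J(R)$: if $x \in I$ then $1-x$ is a unit (its inverse is the finite sum $1 + x + x^2 + \cdots$, which terminates since $x$ is nilpotent), and the same applies to $1 - rx$ for every $r \in R$, so $I \subseteq J(R)$. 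Hence the hypothesis ``$I$ is a nil-ideal'' upgrades for free to ``$I$ is a nil-ideal with $I \subseteq J(R)$''.

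Once that is in place, the corollary is immediate. For the forward direction, if $R$ is CSNC, then since $I \subseteq J(R)$ is an ideal, Lemma \ref{lemma 2.6} (the implication (i) $\Rightarrow$ (ii)) tells us that $I$ is nil --- which we already knew --- and that $R/I$ is CSNC, which is what we want. For the reverse direction, suppose $R/I$ is CSNC. We are given that $I$ is nil, and we have just argued $I \subseteq J(R)$; therefore the two conditions ``$I$ is nil and $R/I$ is a CSNC ring'' of Lemma \ref{lemma 2.6}(ii) are both satisfied, and the implication (ii) $\Rightarrow$ (i) of that lemma yields that $R$ is CSNC. This establishes the equivalence.

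I do not anticipate a genuine obstacle here: the proof is a one-line reduction, and the only point requiring any care is the elementary observation that nilpotence of the elements of $I$ forces $I \subseteq J(R)$, so that Lemma \ref{lemma 2.6} is actually applicable --- without that remark one might worry that the corollary is strictly more general than the lemma, when in fact it is a direct specialization. If one wanted to be even more self-contained, one could also cite Proposition \ref{pro 2.3}, which already shows $J(R) \subseteq {\rm Nil}(R)$ in a CSNC ring, but this is not needed for the argument above; the containment $I \subseteq J(R)$ for a nil-ideal $I$ holds in any ring whatsoever and is the only extra ingredient.
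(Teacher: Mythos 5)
Your proposal is correct and matches the paper's (omitted) argument: the corollary is stated as an immediate consequence of Lemma \ref{lemma 2.6}, and the only ingredient needed is exactly the classical observation you supply, namely that any nil ideal lies in $J(R)$, so the lemma applies verbatim in both directions.
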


\begin{corollary}
A ring $R$ is CSNC if, and only if, $R/{\rm Nil}_{*}(R)$ is CSNC.
\end{corollary}

\begin{corollary}
A ring $R$ is CSNC if, and only if, $2\in {\rm Nil}(R)$ and $R/2R$ is CSNC.
\end{corollary}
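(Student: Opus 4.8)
The statement to prove is: a ring $R$ is CSNC if, and only if, $2\in {\rm Nil}(R)$ and $R/2R$ is CSNC. The natural approach is to recognize this as a near-special case of the previous corollary (Corollary \ref{cor 2.7}), once we identify $2R$ with a nil-ideal sitting inside $J(R)$. So the first step is to observe that if $R$ is CSNC then, by Proposition \ref{pro 2.3}(i), $2\in {\rm Nil}(R)$; consequently $2R$ is a two-sided ideal generated by a central nilpotent element, hence $2R$ is itself a nil-ideal (every element $2r$ satisfies $(2r)^k = 2^k r^k = 0$ for $k$ large, using centrality of $2$). Moreover a nil-ideal is automatically contained in $J(R)$. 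Then Corollary \ref{cor 2.7} (or directly Lemma \ref{lemma 2.6} with $I = 2R$) yields that $R/2R$ is CSNC. This handles the forward direction cleanly.

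For the converse, suppose $2\in {\rm Nil}(R)$ and $R/2R$ is CSNC. As above, $2\in {\rm Nil}(R)$ forces $2R$ to be a nil-ideal of $R$. Now apply Corollary \ref{cor 2.7} in the other direction: since $2R$ is nil and $R/2R$ is CSNC, we conclude $R$ is CSNC. That is the whole argument; the only content beyond citing Corollary \ref{cor 2.7} is the bookkeeping lemma that $2R$ is nil whenever $2$ is nilpotent, which is the ``main obstacle'' only in the trivial sense that one must notice $2$ is central and so powers of $2r$ collapse.

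I expect no real difficulty here. The one point that deserves a sentence of care is the claim that $2R$ (equivalently $R2$, the two-sided ideal) is nil: since $2$ is central in any ring, the ideal $2R$ coincides with $\{2r : r\in R\}$, and $(2r)^{k} = 2^{k}r^{k}$, which vanishes once $2^{k}=0$; thus $2R \subseteq {\rm Nil}(R)$ and in particular $2R\subseteq J(R)$, so Corollary \ref{cor 2.7} (resp. Lemma \ref{lemma 2.6}) applies verbatim. I would write the proof in two short lines invoking Proposition \ref{pro 2.3}(i) and Corollary \ref{cor 2.7}, with the nilpotency-of-$2R$ remark inserted parenthetically.
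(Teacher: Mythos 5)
Your proof is correct and follows exactly the route the paper intends: the corollary is stated as an immediate consequence of Corollary \ref{cor 2.7} (equivalently Lemma \ref{lemma 2.6}) together with Proposition \ref{pro 2.3}(i), and your observation that $2R$ is a nil ideal because $2$ is central (so $(2r)^k=2^kr^k=0$) is precisely the small bookkeeping step needed to make that citation legitimate.
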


We now have the following.

\begin{example}
The ring $\mathbb{Z}_n$ is CSNC if, and only if, $n = 2^m$ for some $m \in \mathbb{N}$.
\end{example}

\begin{proof}
$(\Rightarrow)$. As $\mathbb{Z}_n$ is CSNC, Proposition \ref{pro 2.3} ensures that $2 \in \mathbb{Z}_n$ is nilpotent. This shows that $n = 2^m$ for some $m \in \mathbb{N}$.\\
$(\Leftarrow)$. If in Lemma \ref{lemma 2.6} we take $I=J(\mathbb{Z}_{2^m})$, then nothing remains to be proved.
\end{proof}

In particular, by what we have just shown above, the following criterion holds:

\begin{corollary} \label{cor, 2.11}
A ring $R$ is CSNC if, and only if,

\begin{enumerate}
\item $2 \in R$ is nilpotent.
\item For every clean element $a\in R$, there exists $k \ge 0$ such that $a^{2^k}- a^{2^{k+1}} \in {\rm Nil}(R)$.
\end{enumerate}
\end{corollary}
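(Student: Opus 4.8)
The plan is to combine Corollary~\ref{cor 2.7}, Corollary~\ref{cor, 2.11}'s companion equivalences already established, and the just-proven fact about $\mathbb{Z}_n$, reducing the statement to the structural results of this subsection. The two displayed conditions are clearly necessary: condition~(i) is exactly Proposition~\ref{pro 2.3}(i), and condition~(ii) follows from Theorem~\ref{thm 2.1} together with the idempotent formula produced in its proof (iterating the observation that $a - a^2 \in \mathrm{Nil}(R)$ yields, via the binomial expression for $e$, an exponent $2^k$ with $a^{2^k} - a^{2^{k+1}} \in \mathrm{Nil}(R)$, since $a^{2^k}$ stabilizes modulo nilpotents to an idempotent). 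So the substance is the converse.

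First I would assume (i) and (ii) and show $R$ is CSNC via Theorem~\ref{thm 2.1}, i.e.\ that every clean $a$ satisfies $a - a^2 \in \mathrm{Nil}(R)$. By Corollary~\ref{cor 2.7} (or the stronger Corollary about $R/2R$), since $2 \in \mathrm{Nil}(R)$ there is a nil-ideal $2R$... actually $2R$ need not be nil on its own, so more carefully: let $N$ be the nil-ideal generated by $2$; since $2$ is nilpotent and central-enough, $2R \subseteq \mathrm{Nil}_*(R)$ is a nil-ideal, and by the corollaries $R$ is CSNC iff $R/2R$ is CSNC. Thus we may pass to a ring of characteristic $2^t$ — indeed one can arrange $2 = 0$ after quotienting by the nil-ideal $\sum_{j} 2^j R \cap \mathrm{Nil}_*(R)$, so WLOG $R$ has characteristic $2$. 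Now let $a$ be clean; condition~(ii) gives $k$ with $a^{2^k} - a^{2^{k+1}} \in \mathrm{Nil}(R)$. The point is that $a - a^2 \in \mathrm{Nil}(R)$ follows: set $b = a^{2^k}$, so $b - b^2 \in \mathrm{Nil}(R)$, and one checks $b$ is again clean (a power of a clean element in a ring where $a^{2^k}$ differs from an idempotent by a nilpotent), whence by the idempotent construction in Theorem~\ref{thm 2.1} applied to $b$ we get $b$ strongly nil-clean; but then $a$ and $b$ generate the same behavior modulo nilpotents and a descent argument (using that $t \mapsto t^2$ is "contractive" on $\mathrm{Nil}$ when $2=0$, so $a - a^2$ nilpotent is equivalent to $a^{2^k} - a^{2^{k+1}}$ nilpotent) closes it. Concretely: in characteristic $2$, $a^2 - a^4 = (a - a^2)^2 + 2(\cdots) = (a-a^2)^2$, so $a^{2^k} - a^{2^{k+1}} = (a - a^2)^{2^k}$, hence $(a-a^2)^{2^k} \in \mathrm{Nil}(R)$ forces $a - a^2 \in \mathrm{Nil}(R)$ immediately.

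So the clean skeleton of the argument is: (1) necessity of (i) from Proposition~\ref{pro 2.3}; (2) necessity of (ii) from Theorem~\ref{thm 2.1}; (3) for sufficiency, use Corollary~\ref{cor 2.7} and $2 \in \mathrm{Nil}(R)$ to reduce to the case $2 = 0$; (4) in that case observe $a^{2^k} - a^{2^{k+1}} = (a-a^2)^{2^k}$, so (ii) becomes precisely "$a - a^2 \in \mathrm{Nil}(R)$ for every clean $a$", which is condition~(2) of Theorem~\ref{thm 2.1}, giving CSNC. I expect step~(3) — the clean reduction to characteristic $2$ — to be the main obstacle: one must check that passing to $R/(\text{appropriate nil-ideal})$ does not destroy the hypothesis~(ii) and that "clean" lifts/descends correctly, which is exactly the content already packaged in Corollary~\ref{cor 2.7} and Lemma~\ref{lemma 2.6}, so in the write-up this is a citation rather than new work; the genuinely new computation is the one-line identity $a^{2^k} - a^{2^{k+1}} = (a - a^2)^{2^k}$ valid once $2 = 0$ (and its commutative-binomial justification since $a$ and $a^2$ commute).
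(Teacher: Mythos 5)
Your argument is correct and matches the paper's proof in its essential computation: both rest on the observation that the middle binomial coefficients of $(1-a)^{2^k}$ are even, so that $(a-a^2)^{2^k}=a^{2^k}(1-a)^{2^k}$ equals $a^{2^k}-a^{2^{k+1}}$ up to a term $2g(a)$ with $g(t)\in\mathbb{Z}[t]$, after which Theorem~\ref{thm 2.1} applies. The only difference is organizational: the paper works directly in $R$, absorbing $2g(a)$ as a nilpotent commuting with $a^{2^k}-a^{2^{k+1}}$ (since $2$ is central and nilpotent), rather than first passing to $R/2R$ via Corollary~\ref{cor 2.7}; the direct route spares you the small extra verification that clean elements lift and that hypothesis~(ii) descends to the quotient.
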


\begin{proof}
(i) $\Rightarrow$ (ii). It suffices to put $k=0$; thus by Theorem \ref{thm 2.1} the proof is completed.\\
(ii) $\Rightarrow$ (i). We show that, for every clean element $a\in R$, the difference $a-a^2 \in {\rm Nil}(R$). We have $$\left[a(1-a) \right]^{2^k}=a^{2^k}(1-a)^{2^k}=a^{2^k}(1-2c_1a+2c_2a^2- \cdots+ a^{2^k})=$$
$$=a^{2^k}(1-a^{2^k})+2g(a)=(a^{2^k}-a^{2^{k+1}})+2g(a) \in {\rm Nil}(R),$$
as expected. Note that $2 \in {\rm Nil}(R)$), where $g(t) \in \mathbb{Z}[t]$ and $c_i \in \mathbb{N}$.
\end{proof}

We are now ready to establish the following necessary and sufficient condition.

\begin{theorem}
A ring R is CSNC if, and only if,

\begin{enumerate}
\item $2 \in R$ is nilpotent.
\item For any clean element $a \in R$, there exists $k \ge 0$ such that $a^{2^k} \in R$ is strongly nil-clean.
\end{enumerate}
\end{theorem}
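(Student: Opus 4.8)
The plan is to prove both implications by reducing to the already-established characterizations in Corollary \ref{cor, 2.11} and Theorem \ref{thm 2.1}.

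\medskip

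\noindent\textbf{Proof plan.} ($\Rightarrow$) Suppose $R$ is CSNC. Then condition (i), that $2 \in R$ is nilpotent, is immediate from Proposition \ref{pro 2.3}(i). For condition (ii), let $a \in R$ be clean. By Theorem \ref{thm 2.1} (or directly from the CSNC hypothesis) the element $a$ itself is strongly nil-clean, so one may simply take $k = 0$; indeed $a^{2^0} = a$ is strongly nil-clean by definition of CSNC.

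\medskip

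\noindent($\Leftarrow$) For the converse, assume (i) and (ii) and let $a \in R$ be an arbitrary clean element; we must show $a - a^2 \in {\rm Nil}(R)$, after which Theorem \ref{thm 2.1} gives that $R$ is CSNC. By hypothesis (ii) there is some $k \ge 0$ with $a^{2^k}$ strongly nil-clean, hence $a^{2^k} - (a^{2^k})^2 = a^{2^k} - a^{2^{k+1}} \in {\rm Nil}(R)$ (any strongly nil-clean element $b = e + q$ satisfies $b - b^2 = q(1 - 2e - q) \in {\rm Nil}(R)$, exactly as in the proof of Theorem \ref{thm 2.1}(i)$\Rightarrow$(ii)). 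Thus $a$ satisfies condition (ii) of Corollary \ref{cor, 2.11} for this $k$, and together with (i) that corollary yields directly that $R$ is CSNC. Alternatively, one can argue by hand: from the binomial expansion $[a(1-a)]^{2^k} = a^{2^k}(1-a)^{2^k} = (a^{2^k} - a^{2^{k+1}}) + 2g(a)$ for some $g(t) \in \mathbb{Z}[t]$, and since both $a^{2^k} - a^{2^{k+1}} \in {\rm Nil}(R)$ and $2 \in {\rm Nil}(R)$ (the latter making $2g(a)$ nilpotent, as $g(a)$ commutes with $2$), we get $[a(1-a)]^{2^k} \in {\rm Nil}(R)$, whence $a - a^2 = a(1-a) \in {\rm Nil}(R)$. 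Theorem \ref{thm 2.1} then finishes the argument.

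\medskip

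\noindent\textbf{Main obstacle.} There is essentially no deep obstacle here: the statement is a repackaging of Corollary \ref{cor, 2.11} with ``$a^{2^k} - a^{2^{k+1}} \in {\rm Nil}(R)$'' replaced by the formally stronger-looking ``$a^{2^k}$ is strongly nil-clean.'' The only point requiring a small amount of care is verifying the equivalence of these two conditions for a power of a clean element: that $a^{2^k}$ strongly nil-clean implies $a^{2^k} - a^{2^{k+1}} \in {\rm Nil}(R)$ is the routine computation above, and the reverse direction — needed to make the ($\Rightarrow$) part land on the strong-nil-clean formulation rather than on a bare polynomial identity — is handled by the CSNC hypothesis itself via Theorem \ref{thm 2.1}, since under CSNC every clean element, in particular $a = a^{2^0}$, is already strongly nil-clean. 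So the proof is a short bookkeeping argument chaining Proposition \ref{pro 2.3}, Theorem \ref{thm 2.1}, and Corollary \ref{cor, 2.11}.
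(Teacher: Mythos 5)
Your proof is correct and follows essentially the same route as the paper: the forward direction is immediate from the definition of CSNC (take $k=0$) together with Proposition \ref{pro 2.3}, and the converse computes $a^{2^k}-a^{2^{k+1}}=q(1-2e-q)\in{\rm Nil}(R)$ from the strongly nil-clean decomposition of $a^{2^k}$ and then invokes Corollary \ref{cor, 2.11}, exactly as the paper does. The only difference is cosmetic: you additionally unwind the proof of Corollary \ref{cor, 2.11} as an ``alternative'' argument, which the paper simply cites.
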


\begin{proof}
$(\Rightarrow)$. It is obvious.\\
$(\Leftarrow)$.  Let us assume that $a$ is an arbitrary clean element. Then, there exists $k \ge 0$ such that $a^{2^k} \in R$ is strongly nil-clean, writing $a^{2^k} = e + q$, where $e$ is an idempotent and $q$ is a nilpotent with $eq = qe$. Thus, $$a^{2^{k+1}} = e + (2e+q)q,$$ and hence $$a^{2^k}-a^{2^{k+1}}=(1-(2e+q))q \in {\rm Nil}(R).$$ The final part follows from Corollary \ref{cor, 2.11}.
\end{proof}

If $R$ is a ring and $G$ is a group, then as usual $RG$ denotes the group ring of the group $G$ over $R$. The ring homomorphism $\omega : RG \to R$, defined by $\sum r_gg \to \sum r_g$ is called the {\it augmentation map}, and $ker(\omega)$ is called the {\it augmentation ideal} of the group ring $RG$ denoted by $\Delta(RG)$. That is, $\Delta(RG) = \{\sum r_gg \in RG: \sum r_g = 0\}$.

We now need the next two technicalities.

\begin{lemma}
Let $R$ be a ring and let $G$ be a group. If $RG$ is a CSNC ring, then $R$ is also a CSNC ring and $G$ is a $2$-group.
\end{lemma}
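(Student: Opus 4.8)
The plan is to split the claim into its two parts and handle them via the structural results already established. The natural strategy is: pass the CSNC property from $RG$ down to $R$ using the ring embedding $R\hookrightarrow RG$ together with Lemma \ref{Subring} (a subring of a CSNC ring is CSNC), and then exploit Proposition \ref{pro 2.3} — which says $2\in{\rm Nil}(RG)$ in any CSNC ring — to force $G$ to be a $2$-group.

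For the first part, observe that $R$ sits inside $RG$ as the subring of constant elements $r\mapsto r\cdot 1_G$, and this embedding is unital. Hence by Lemma \ref{Subring}, $R$ is a CSNC ring. (Alternatively, one may note that the augmentation map $\omega\colon RG\to R$ is a surjective ring homomorphism and CSNC passes to homomorphic images via the characterization in Theorem \ref{thm 2.1}: if $a-a^2\in{\rm Nil}(RG)$ for clean $a$, then applying $\omega$ gives the same for any clean element of $R$ lifted back through a clean preimage.) Either route gives $R$ CSNC immediately.

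For the second part, suppose toward a contradiction that $G$ is not a $2$-group, so there is an element $g\in G$ whose order is divisible by an odd prime $p$; replacing $g$ by a suitable power we may assume $g$ has order $p$. Consider the element $u=g\in RG$. Since $g$ is a unit in $RG$ (with inverse $g^{p-1}$), it is a clean element, so by Theorem \ref{thm 2.1} we get $g-g^2\in{\rm Nil}(RG)$. But $g-g^2=g(1-g)$, and since $g$ is a unit this forces $1-g\in{\rm Nil}(RG)$. Now $1-g$ lies in the commutative subring $R[g]\cong R[x]/(x^p-1)$ (actually in the group ring of the cyclic group $\langle g\rangle$ over the prime subring), and one computes that $1-g$ being nilpotent is incompatible with $p$ being odd: indeed $\sum_{i=0}^{p-1}g^i$ is a nonzero idempotent-like element annihilated by $1-g$ up to a factor of $p$, or more directly, applying the augmentation $\omega$ (or reducing mod the ideal generated by $1-g$) shows $0=\omega(1-g)$ is consistent but nilpotency of $1-g$ in the integral group ring $\mathbb{Z}\langle g\rangle$ fails because $\mathbb{Z}\langle g\rangle$ maps onto $\mathbb{Z}[\zeta_p]$ with $1-\zeta_p\neq 0$ not nilpotent. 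The cleanest argument: $2\in{\rm Nil}(RG)$ by Proposition \ref{pro 2.3}(i), so $R$ has characteristic a power of $2$, hence $p$ is invertible modulo $2^m$; then in $RG$ the idempotent $\hat g=p^{-1}\sum_{i=0}^{p-1}g^i$ (well-defined since $p$ is a unit) satisfies $\hat g(1-g)=0$, and on the complementary summand $(1-\hat g)RG$ the element $g$ acts as a unit with $(1-g)$ invertible; combining, $1-g$ is a unit times an idempotent, so $1-g\in{\rm Nil}(RG)$ forces $1-\hat g=0$, i.e. $g=1$, contradicting $|g|=p$.

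The main obstacle is the second part: making rigorous that $1-g$ cannot be nilpotent when $g$ has odd prime order. The subtlety is that $RG$ need not be commutative and $R$ is only known to be CSNC (not, say, a field), so one cannot simply invoke a field-theoretic fact about roots of unity. The device of using $2\in{\rm Nil}(R)$ to invert $p$ and construct the idempotent $\hat g$ inside the central subring generated by $g$ is what makes this go through cleanly; I would write that part carefully, checking that $\hat g$ is genuinely idempotent and central enough for the Peirce decomposition argument, and that $g$ restricted to $(1-\hat g)RG$ is a unit with $1-g$ invertible there (this follows since on that summand $\sum g^i=0$, so $(1-g)(1+2g+3g^2+\cdots)$ telescopes to a unit). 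Everything else is routine.
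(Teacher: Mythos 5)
Your first half --- passing the CSNC property from $RG$ down to $R$ via the unital embedding $r\mapsto r\cdot 1_G$ and Lemma \ref{Subring} --- is exactly what the paper does. For the second half you take a genuinely different route, and it contains a real gap: from ``$G$ is not a $2$-group'' you pass to ``there is an element whose order is divisible by an odd prime $p$.'' That implication fails when $G$ has an element of infinite order (e.g.\ $G=\mathbb{Z}$ is not a $2$-group yet has no element of odd prime order), and the lemma is stated for an arbitrary group. Your idempotent device does work for torsion elements: $p$ is invertible because $2^m=0$ in $R$, $\hat g=p^{-1}\sum_{i=0}^{p-1}g^i$ is idempotent with $\hat g(1-g)=0$, the identity $(1-g)\sum_{i=0}^{p-1}(i+1)g^i=\sum_{i=0}^{p-1}g^i-p$ shows $(1-\hat g)(1-g)$ is a unit of the corner ring $(1-\hat g)RG(1-\hat g)$, and a nilpotent unit forces that corner to vanish, whence $\hat g=1$ and a comparison of coefficients in the group-ring basis yields $1=0$ in $R$ (this coefficient comparison is the step you elide when you write ``i.e.\ $g=1$''). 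But the argument says nothing about elements of infinite order, so as written the proof is incomplete; your preliminary $\mathbb{Z}[\zeta_p]$ remarks should also be discarded, since the coefficients live in $R$ rather than $\mathbb{Z}$.

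The paper's argument is both shorter and free of this case distinction. It quotients by the nil ideal $2RG$ (Proposition \ref{pro 2.3} together with Corollary \ref{cor 2.7}) so that one may assume $2=0$ in $R$; then for any $g\in G$, $g$ is a clean unit, so $g-g^2\in{\rm Nil}(RG)$ and hence $1-g\in{\rm Nil}(RG)$, say $(1-g)^{2^m}=0$; in characteristic $2$ the binomial theorem collapses to $(g-1)^{2^m}=g^{2^m}-1$, giving $g^{2^m}=1$. This treats every $g$ at once, torsion or not. Your proof can be repaired either by adopting that reduction, or by adding the observation that if $g$ has infinite order then $1,g,\dots,g^n$ are distinct basis elements of $RG$, so $(1-g)^n=0$ forces the coefficient of $g^n$, namely $\pm 1$, to vanish in $R$ --- a contradiction --- after which your odd-prime argument disposes of the torsion case.
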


\begin{proof}
Suppose that $RG$ is a CSNC ring. By what we have previously proved in Lemma~\ref{Subring}, it follows that $R$ is a CSNC ring as well, because $R$ is a subring of $RG$. Now, we can apply Proposition~\ref{pro 2.3} to infer that $2$ is nilpotent in both $RG$ and $R$. This routinely forces that $2RG$ is a nil-ideal of $RG$, and therefore $(R/2R)G \cong RG/(2RG)$ is a CSNC ring. Furthermore, with no harm of generality, we may assume $2 = 0 \in R$. Given any $g \in G$, we know that $g \in U(RG)$. By assumption, $1 - g$ is a nilpotent in $RG$, which means $(1- g)^{2^m} = 0$ for some $m \geq 1$. This guarantees that $$g^{2^m} - 1 = (g - 1)^{2^m} = 0,$$ and consequently $g^{2^m} = 1$. Thus, $G$ is a $2$-group, as promised.
\end{proof}

\begin{lemma}
If $R$ is a CSNC ring and $G$ is a locally finite $2$-group, then $RG$ is a CSNC ring.
\end{lemma}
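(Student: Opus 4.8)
The plan is to reduce to the case $2=0$ in $R$ and then exploit the CSNC characterization from Theorem~\ref{thm 2.1}, namely that it suffices to check that every clean element $x$ of $RG$ satisfies $x-x^2\in{\rm Nil}(RG)$. First, since $R$ is CSNC, Proposition~\ref{pro 2.3} gives $2\in{\rm Nil}(R)$, so $2RG$ is a nil ideal of $RG$; by Corollary~\ref{cor 2.7} it is enough to prove that $(R/2R)G\cong RG/2RG$ is CSNC. Hence I would assume from the outset that $2=0$ in $R$, which also forces ${\rm char}(R)$ to be a power of $2$ and makes $R$ a UU-ring with every clean element strongly clean (Theorem~\ref{thm 2.4}), a property we want to transfer to $RG$.

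The key structural input is that $G$ is a locally finite $2$-group and $2=0$ in $R$. For a \emph{finite} $2$-group $H$ and a ring $R$ with $2$ nilpotent, the augmentation ideal $\Delta(RH)$ is nilpotent: this is the classical fact that for a finite $p$-group $H$ and a ring in which $p$ is nilpotent, $\Delta(RH)$ is a nil (indeed nilpotent) ideal, since $(g-1)$ for $g\in H$ generate $\Delta(RH)$ and each is nilpotent, and everything is compatible with the filtration by powers of the augmentation ideal. Consequently $RH/\Delta(RH)\cong R$. For a \emph{locally finite} $2$-group $G$, the same reasoning shows $\Delta(RG)$ is a nil ideal of $RG$: any element of $\Delta(RG)$ lies in $\Delta(RH)$ for the finite subgroup $H$ generated by the (finitely many) group elements appearing in its support, and is therefore nilpotent in $RH$ and a fortiori in $RG$. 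Thus $\Delta(RG)$ is a nil ideal with $RG/\Delta(RG)\cong R$.

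Now I would invoke Corollary~\ref{cor 2.7} once more: since $\Delta(RG)$ is a nil ideal of $RG$ and $RG/\Delta(RG)\cong R$ is CSNC by hypothesis, it follows immediately that $RG$ is CSNC. Combining this with the first reduction (replacing $R$ by $R/2R$) completes the proof. Concretely: $R$ CSNC $\Rightarrow$ $2\in{\rm Nil}(R)$ $\Rightarrow$ $RG$ is CSNC iff $(R/2R)G$ is CSNC (Corollary~\ref{cor 2.7}); $(R/2R)G$ has nil augmentation ideal with quotient $R/2R$, which is CSNC (again Corollary~\ref{cor 2.7}); done.

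The main obstacle is the claim that $\Delta(RG)$ is nil when $2=0$ in $R$ and $G$ is a locally finite $2$-group. The reduction to finite subgroups is routine, so the crux is the finite $p$-group case: one must argue that $\Delta(RH)$ is nilpotent for a finite $2$-group $H$ over a ring $R$ with $2$ nilpotent. This is standard (it follows by induction on $|H|$ using a central element of order $2$, or directly from the fact that $\Delta(\mathbb{Z}_{2^k}H)$ is nilpotent together with base change), but it is the one place where some genuine group-ring input is needed rather than element-wise manipulation; care is required to handle the case where $R$ is noncommutative, where one should work with the two-sided ideal generated by $\{g-1:g\in H\}$ and use that modulo this ideal $RH$ collapses to $R$.
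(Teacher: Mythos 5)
Your proposal is correct and follows essentially the same route as the paper: both reduce the problem to showing that the augmentation ideal $\Delta(RG)$ is nil (the paper cites the classical group-ring result directly, whereas you sketch the reduction to finite subgroups and the finite $2$-group case), and then conclude from $RG/\Delta(RG)\cong R$ together with Corollary~\ref{cor 2.7}. Your preliminary reduction to the case $2=0$ in $R$ is harmless but not needed, since the nilpotency of $\Delta(RH)$ for finite $2$-groups $H$ already holds when $2$ is merely nilpotent in $R$.
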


\begin{proof}
The application of Proposition \ref{pro 2.3} enables us that $2 \in {\rm Nil}(R)$. Since $G$ is a locally finite $2$-group, the ideal $\Delta(RG)$ is nilpotent referring to \cite[Proposition 16(ii)]{two}. However, we observe that $R \cong RG/\Delta(RG)$ is a CSNC ring. Therefore, Corollary \ref{cor 2.7} allows us to deduce that $RG$ is a CSNC ring, as stated.
\end{proof}

Summarizing the achieved above results, we conclude:

\begin{theorem}
Let $R$ be a ring and let $G$ be a locally finite group. Then, $RG$ is a CSNC ring if, and only if, $R$ is a CSNC ring and $G$ is a $2$-group
\end{theorem}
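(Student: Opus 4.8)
The plan is to assemble the final theorem from the two preceding lemmas together with the already-established machinery on CSNC rings, in particular Corollary~\ref{cor 2.7} and the group ring lemmas just proved. The statement is an ``if and only if'', so I would treat the two directions separately.

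For the forward direction, suppose $RG$ is a CSNC ring with $G$ locally finite. The previous lemma (the one asserting that $RG$ CSNC forces $R$ CSNC and $G$ a $2$-group) applies verbatim and gives immediately that $R$ is CSNC and that every element of $G$ has order a power of $2$, i.e. $G$ is a $2$-group. Note that local finiteness of $G$ is not even needed for this direction; it is only the converse that requires it. So this half is essentially a one-line citation of the preceding lemma.

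For the converse, assume $R$ is CSNC and $G$ is a locally finite $2$-group. Then $G$ is in particular a locally finite $2$-group, so the second preceding lemma (CSNC ring over a locally finite $2$-group yields a CSNC group ring) applies directly and gives that $RG$ is CSNC. The key ingredients buried in that lemma are: Proposition~\ref{pro 2.3} to get $2 \in {\rm Nil}(R)$; the fact that $\Delta(RG)$ is a nil (indeed locally nilpotent) ideal when $G$ is a locally finite $2$-group and $2 \in {\rm Nil}(R)$, via \cite[Proposition 16(ii)]{two}; the isomorphism $RG/\Delta(RG) \cong R$; and finally Corollary~\ref{cor 2.7}, which says CSNC passes between a ring and its quotient by a nil ideal. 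Putting these together closes the converse.

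The only genuine subtlety — and the step I would flag as the main obstacle — is reconciling the hypothesis in the converse lemma (``$G$ locally finite $2$-group'') with the conclusion of the forward lemma (``$G$ a $2$-group'') under the ambient assumption that $G$ is locally finite: one must be careful that a locally finite group which is a $2$-group is the same thing as a locally finite $2$-group, so that the two lemmas dovetail without any gap. Since every finitely generated subgroup of a locally finite $2$-group is a finite $2$-group, this is immediate, but it is worth stating explicitly so the equivalence is clean. Beyond that, the proof is purely a matter of citing the two lemmas, and I would write it in two short sentences.

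\begin{proof}
$(\Rightarrow)$. Suppose $RG$ is a CSNC ring. Then, by the penultimate lemma, $R$ is a CSNC ring and $G$ is a $2$-group.

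$(\Leftarrow)$. Suppose $R$ is a CSNC ring and $G$ is a $2$-group. Since $G$ is locally finite and every element of $G$ has order a power of $2$, $G$ is a locally finite $2$-group. Hence, by the preceding lemma, $RG$ is a CSNC ring, as required.
\end{proof}
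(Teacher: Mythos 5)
Your proposal is correct and is exactly the argument the paper intends: the theorem is stated as a summary of the two preceding lemmas, with the forward direction following from the first group-ring lemma and the converse from the second applied to the locally finite $2$-group $G$. Your extra remark reconciling ``locally finite group that is a $2$-group'' with ``locally finite $2$-group'' is a sensible clarification but does not change the route.
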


In what follows, we try to construct some rings that are CSNC but {\it not} CUSC. To this aim, let $A, B$ be two rings and let $M, N$ be the $(A, B)$-bimodule and $(B, A)$-bimodule, respectively. Also, we consider the bilinear maps $\phi : M\otimes_B N \to A$ and $\psi : N\otimes_AM \to B$ that apply to the following properties

$$Id_M \otimes_B \psi = \phi \otimes_A Id_M, \quad Id_N \otimes_A \phi = \psi \otimes_B Id_N .$$
For $m \in M$ and $n \in N$, we define $mn := \phi(m \otimes n)$ and $nm := \psi(n \otimes m)$.
Thus, the 4-tuple
$R= \begin{pmatrix}
A & M \\
N & B
\end{pmatrix}$
forms an associative ring equipped with the obvious matrix operations, which is standardly called the {\it Morita context} ring. Denote the two-sided
ideals of $MN$ and $NM$ as $Im\phi$ and $Im\psi$, respectively, that are called the {\it trace ideals} of the Morita context.

Our next important result states thus:

\begin{theorem} \label{thm 2.20}
Let  $R= \begin{pmatrix}
A & M \\
N & B
\end{pmatrix}$
be a Morita context ring such that $MN$ and $NM$ are nilpotent ideals of $A$ and $B$, respectively. Then, $R$ is a CSNC ring if, and only if, both $A$
and $B$ are CSNC rings.
\end{theorem}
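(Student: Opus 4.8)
Here is the plan. The idea is to pass through the earlier machinery on nil ideals (Corollary \ref{cor 2.7}) and finite products (Lemma \ref{finite}) by means of a single well‑chosen ideal of $R$. Put
\[
I=\begin{pmatrix} MN & M \\ N & NM \end{pmatrix}\subseteq R .
\]
First I would check that $I$ is a two–sided ideal of $R$; this is a purely entry–wise computation using only that $MN$ is an ideal of $A$ and $NM$ is an ideal of $B$ (which holds since these are the trace ideals), together with the bimodule actions $AM\subseteq M$, $MB\subseteq M$, $NA\subseteq N$, $BN\subseteq N$. Next I would identify the quotient: the assignment $\begin{pmatrix} a & m \\ n & b \end{pmatrix}\mapsto (a+MN,\ b+NM)$ is a surjective ring homomorphism $R\to (A/MN)\times(B/NM)$ whose kernel is exactly $I$, so $R/I\cong (A/MN)\times(B/NM)$.

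The one point that needs an argument is that $I$ is nilpotent (hence nil). Write $K=\begin{pmatrix} 0 & M \\ N & 0 \end{pmatrix}$ for the additive subgroup of $R$. A direct computation gives $K^{2}=\begin{pmatrix} MN & 0 \\ 0 & NM \end{pmatrix}$ and, by induction, $K^{2j}=\begin{pmatrix} (MN)^{j} & 0 \\ 0 & (NM)^{j} \end{pmatrix}$. Choosing $s$ to be the larger of the nilpotency indices of $MN$ and $NM$, we get $K^{2s}=0$, and therefore $K^{n}=0$ for every $n\ge 2s$. Since $I=K+K^{2}$ as additive groups, expanding $(K+K^{2})^{n}$ term by term shows each summand is some $K^{m}$ with $n\le m\le 2n$, so $I^{n}\subseteq K^{n}+K^{n+1}+\cdots+K^{2n}$, which vanishes once $n\ge 2s$. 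Thus $I$ is a nilpotent ideal of $R$.

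With these two facts the theorem follows by concatenating equivalences from the excerpt: Corollary \ref{cor 2.7} applied to the nil ideal $I$ of $R$ gives that $R$ is CSNC iff $R/I\cong(A/MN)\times(B/NM)$ is CSNC; Lemma \ref{finite} gives that this holds iff both $A/MN$ and $B/NM$ are CSNC; and Corollary \ref{cor 2.7} applied once more — now to the nilpotent ideals $MN\trianglelefteq A$ and $NM\trianglelefteq B$ — gives that $A/MN$ is CSNC iff $A$ is CSNC, and likewise for $B$. Chaining these yields that $R$ is CSNC iff both $A$ and $B$ are CSNC, which is the assertion. I do not expect a real obstacle here: the only computational step is the verification that $I$ is a nilpotent ideal, and the hypothesis on $MN$ and $NM$ is exactly what makes that short; the only thing to be careful about is keeping the bimodule–action bookkeeping straight in the entry–wise calculations.
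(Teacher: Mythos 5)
Your proof is correct, and it takes a genuinely different route from the paper's. The paper proves the forward direction separately (via the subring lemma, Lemma \ref{Subring}) and, for the converse, passes through the Jacobson radical: it invokes an external structure result to get $J(R)=\left(\begin{smallmatrix} J(A) & M \\ N & J(B)\end{smallmatrix}\right)$ and $R/J(R)\cong A/J(A)\times B/J(B)$, shows $J(R)$ is nil by essentially the same power computation you perform on $\left(\begin{smallmatrix} MN & M \\ N & NM\end{smallmatrix}\right)$, and then concludes via Lemma \ref{lemma 2.6} and Lemma \ref{finite}. You instead work directly with the explicitly constructed nilpotent ideal $I=\left(\begin{smallmatrix} MN & M \\ N & NM\end{smallmatrix}\right)$, identify $R/I\cong (A/MN)\times(B/NM)$, and obtain both directions at once from the chain of equivalences in Corollary \ref{cor 2.7} and Lemma \ref{finite}. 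What your approach buys: it avoids the citation for the structure of $J(R)$ of a Morita context, does not need the fact that $J(A)$ and $J(B)$ are nil, and handles the forward implication without a separate subring argument; the core nilpotency computation ($K^{2j}$ diagonal with entries $(MN)^j$ and $(NM)^j$, plus $I=K+K^2$) is clean and complete. The only bookkeeping worth writing out in full is the verification that $I$ is an ideal and that the diagonal projection is multiplicative modulo the trace ideals, both of which you correctly reduce to the bimodule axioms and the fact that $MN$, $NM$ are ideals.
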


\begin{proof}
As both $A$ and $B$ are subrings of $R$, bearing in mind Lemma~\ref{Subring}, they are CSNC rings.\\

Oppositely, if $A,B$ are two CSNC rings, then Lemma \ref{lemma 2.6} informs that $J(A)$ and $J(B)$ are nil-ideals and also that $A/J(A)$ and $B/J(B)$ are both CSNC rings. On the other hand, \cite[Theorem 13(1)]{khurana2015uniquely} leads us to
$R/J(R) \cong A/J(A)\times B/J(B)$, and
$J(R)=
\begin{pmatrix}
J(A) & M \\
N & J(B)
\end{pmatrix}
$.
Since it follows from Lemma~\ref{finite} that the direct product of CSNC rings is again a CSNC ring, the ring $R/J(R)$ remains CSNC.

Furthermore, according to Lemma \ref{lemma 2.6}, it needs to show that $J(R)$ is a nil-ideal. To this purpose, suppose
$S= \begin{pmatrix}
a_1 & m \\
n & a_2
\end{pmatrix} \in J(R)$
such that $a_1^n=a_2^n=0$ for some $n \in \mathbb{N}$. Then, one inspects that
$S^n \in
\begin{pmatrix}
 MN& M \\
N & NM
\end{pmatrix}
$
and thus, after an easy manipulation by induction on $k\geq 1$, we infer that
$$\begin{pmatrix}
 MN& M \\
N & NM
\end{pmatrix}^{2k}=
\begin{pmatrix}
 (MN)^k& (MN)^kM \\
(NM)^kN & (NM)^k
\end{pmatrix}.
$$
Consequently, $J(R)$ is nil, as needed.
\end{proof}

Let $R$, $S$ be two rings, and let $M$ be an $(R, S)$-bimodule such that the operation $(rm)s = r(ms$) is valid for all $r \in R$, $m \in M$ and $s \in S$. Given such a bimodule $M$, we can set

$$
T(R, S, M) =
\begin{pmatrix}
 R& M \\
 0& S
\end{pmatrix}
=
\left\{
\begin{pmatrix}
 r& m \\
 0& s
\end{pmatrix}
: r \in R, m \in M, s \in S
\right\},
$$
where it becomes a ring with the usual matrix operations. The so-defined formal matrix $T(R, S, M$) is called a {\it formal triangular matrix} ring. In Theorem \ref{thm 2.20}, if we just set $N =\{0\}$, then we will obtain the following three consequences.

\begin{corollary}
Let $R$, $S$ be two rings and let $M$ be an $(R, S)$-bimodule. Then, the formal triangular matrix ring $T(R, S, M)$ is CSNC if, and only
if, both $R$ and $S$ are CSNC rings.
\end{corollary}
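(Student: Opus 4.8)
The plan is to obtain this corollary as an immediate specialization of Theorem~\ref{thm 2.20}. The key observation is that the formal triangular matrix ring $T(R,S,M)$ is exactly the Morita context ring $\begin{pmatrix} A & M \\ N & B \end{pmatrix}$ in which one takes $A = R$, $B = S$ and $N = \{0\}$, with the bilinear maps $\phi : M \otimes_B N \to A$ and $\psi : N \otimes_A M \to B$ both equal to the zero map. Here $M$ is the given $(R,S)$-bimodule, $N=\{0\}$ is the zero $(S,R)$-bimodule, and the compatibility identities $Id_M \otimes_B \psi = \phi \otimes_A Id_M$ and $Id_N \otimes_A \phi = \psi \otimes_B Id_N$ hold trivially since each tensor product appearing in them has a factor equal to $\{0\}$; the resulting multiplication on the $4$-tuple is precisely the usual multiplication of upper triangular matrices, so the two rings coincide.

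Next I would verify that the hypotheses of Theorem~\ref{thm 2.20} are automatically satisfied in this degenerate situation. Since $N = \{0\}$, the trace ideals are $MN = M\cdot\{0\} = \{0\}$ of $A = R$ and $NM = \{0\}\cdot M = \{0\}$ of $B = S$; being the zero ideal, each of them is trivially nilpotent. Therefore Theorem~\ref{thm 2.20} applies verbatim and yields that $T(R,S,M) = \begin{pmatrix} R & M \\ 0 & S \end{pmatrix}$ is a CSNC ring if, and only if, both $R$ and $S$ are CSNC rings, which is exactly the assertion.

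I do not expect any genuine obstacle here: the entire content of the statement is already packaged in Theorem~\ref{thm 2.20}, and the only point requiring care is recording that the Morita context with $N=\{0\}$ genuinely reproduces $T(R,S,M)$ and has (trivially) nilpotent trace ideals. If one instead wanted a self-contained proof bypassing the Morita context machinery, the analogous argument would run as follows: for the forward direction, note that $R$ and $S$ embed as subrings of $T(R,S,M)$ and apply Lemma~\ref{Subring}; for the converse, observe that $J\bigl(T(R,S,M)\bigr) = \begin{pmatrix} J(R) & M \\ 0 & J(S) \end{pmatrix}$, which is nil because $M$ is a square-zero ideal of $T(R,S,M)$ and $J(R), J(S)$ are nil by Proposition~\ref{pro 2.3}, while $T(R,S,M)/J\bigl(T(R,S,M)\bigr) \cong R/J(R) \times S/J(S)$ is a finite product of CSNC rings, hence CSNC by Lemma~\ref{finite}; then Lemma~\ref{lemma 2.6} lifts the CSNC property back to $T(R,S,M)$.
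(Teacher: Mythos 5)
Your proposal is correct and is precisely the paper's argument: the paper derives this corollary by setting $N=\{0\}$ in Theorem~\ref{thm 2.20}, so that $MN=NM=\{0\}$ are trivially nilpotent trace ideals and the Morita context ring reduces to $T(R,S,M)$. Your additional self-contained sketch via Lemmas~\ref{Subring}, \ref{finite} and \ref{lemma 2.6} is a reasonable backup but is not needed.
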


\begin{corollary}
Let $K$ be a ring, and $n \ge 1$ is a natural number. Then, ${\rm T}_n(K)$ is a CSNC ring if, and only if, $K$ is a CSNC ring.
\end{corollary}

\begin{proof}
It is sufficient just to take in the preceding assertion $R=K$, $S={\rm T}_{n-1}(K)$ and $M=K^{n-1}$.
\end{proof}

\begin{corollary}
Let $K$ be a ring, and $n \ge 1$ is a natural number. Then, ${\rm T}_n(K)$ is a CSNC ring if, and only if, $K$ is a CSNC ring.
\end{corollary}

Further, given a ring $R$ and a central element $s$ of $R$, the $4$-tuple
$\begin{pmatrix}
R & R \\
R & R
\end{pmatrix}$
becomes a ring with addition defined component-wise and with multiplication defined by
$$
\begin{pmatrix}
 a_1& x_1 \\
 y_1& b_1
\end{pmatrix}
\begin{pmatrix}
  a_2& x_2 \\
 y_2& b_2
\end{pmatrix}=
\begin{pmatrix}
 a_1a_2 + sx_1y_2& a_1x_2 + x_1b_2 \\
 y_1a_2 + b_1y_2& sy_1x_2 + b_1b_2
\end{pmatrix}.
$$
This ring is denoted by $Ks(R)$. A Morita context
$
\begin{pmatrix}
  A& M \\
 N& B
\end{pmatrix}
$
with $A = B = M = N = R$ is called a {\it generalized matrix ring} over $R$. It was observed in \cite{krylov2008isomorphism} that a ring $S$ is a generalized matrix ring over $R$ if, and only if, $S = K_s(R)$ for some $s \in {\rm C}(R)$. Here, in the present situation, $MN = NM = sR$, so that $MN \subseteq J(A) \Longleftrightarrow  s \in J(R)$, $NM \subseteq J(B) \Longleftrightarrow  s \in  J(R)$, and $MN, NM$ are nilpotents
$\Longleftrightarrow$ $s$ is a nilpotent.

Thus, Theorem \ref{thm 2.20} has the following consequence, too.

\begin{corollary}
Let $R$ be a ring, and $s \in {\rm C}(R)\cap {\rm Nil}(R)$. Then, the formal matrix ring $K_s(R)$ is CSNC if, and only if, the former ring $R$ is CSNC.
\end{corollary}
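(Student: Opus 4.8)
The plan is to reduce the whole statement directly to Theorem~\ref{thm 2.20}. Recall from the discussion preceding the corollary that $K_s(R)$ is, by construction, the Morita context ring $\begin{pmatrix} A & M \\ N & B \end{pmatrix}$ with $A=B=M=N=R$ whose bimodule products are twisted by the central element $s$, so that its trace ideals are $MN=NM=sR$. Consequently the only hypothesis of Theorem~\ref{thm 2.20} that needs to be verified is that $MN$ is a nilpotent ideal of $A$ and $NM$ is a nilpotent ideal of $B$; once this is in place, Theorem~\ref{thm 2.20} immediately yields that $K_s(R)$ is CSNC if, and only if, both $A=R$ and $B=R$ are CSNC, which is precisely the assertion that $R$ is CSNC.

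First I would verify nilpotency of the trace ideal. Since $s\in {\rm C}(R)$, it commutes with every element of $R$, so an easy induction gives $(sR)^k=s^kR$ for all $k\ge 1$. Because $s\in {\rm Nil}(R)$, there is $n\in\mathbb{N}$ with $s^n=0$, and hence $(sR)^n=s^nR=\{0\}$. Thus $MN=NM=sR$ is a nilpotent two-sided ideal of $R$, so both of the nilpotency hypotheses in Theorem~\ref{thm 2.20} hold. (This is exactly the equivalence ``$MN,NM$ are nilpotent $\Longleftrightarrow s$ is nilpotent'' recorded just before the statement.)

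Then I would simply invoke Theorem~\ref{thm 2.20} with $A=B=R$: it gives that the $s$-twisted matrix ring $K_s(R)=\begin{pmatrix} R & R \\ R & R \end{pmatrix}$ is a CSNC ring exactly when both copies of $R$ are CSNC, i.e. exactly when $R$ is a CSNC ring, which finishes the proof. There is essentially no serious obstacle here; the corollary is a straight specialization of Theorem~\ref{thm 2.20}, which itself already rests on Lemma~\ref{Subring}, Lemma~\ref{finite} and Lemma~\ref{lemma 2.6}. The one point deserving a moment's care is the identity $(sR)^k=s^kR$, which needs $s$ to be central — and centrality of $s$ is in any case indispensable, since without it $K_s(R)$ would not even be an associative ring.
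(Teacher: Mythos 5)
Your proof is correct and follows exactly the route the paper intends: the corollary is stated as a direct consequence of Theorem~\ref{thm 2.20}, using the observation (recorded just before the statement) that the trace ideals $MN=NM=sR$ are nilpotent precisely when the central element $s$ is nilpotent. Your explicit verification that $(sR)^n=s^nR=\{0\}$ fills in the only detail the paper leaves implicit.
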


We, thereby, immediately have:

\begin{corollary}
Let $R$ be a ring. Then the formal matrix ring $K_2(R)$ is CSNC if, and only if, $R$ is CSNC.
\end{corollary}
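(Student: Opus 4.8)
The plan is to obtain the equivalence as an essentially immediate consequence of the preceding corollary on $K_s(R)$, specialized to $s=2$; the only point requiring attention is that that corollary carries the standing hypothesis $s\in {\rm C}(R)\cap {\rm Nil}(R)$, whereas $2$ need not be nilpotent in an arbitrary ring $R$. Accordingly I would argue the two implications by slightly different routes.

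For the direction ``$R$ CSNC $\Rightarrow$ $K_2(R)$ CSNC'', I would first apply Proposition~\ref{pro 2.3} to deduce $2\in {\rm Nil}(R)$. Since $2$ is central in every ring, this gives $2\in {\rm C}(R)\cap {\rm Nil}(R)$, so the preceding corollary applies verbatim with $s=2$ and yields that $K_2(R)$ is a CSNC ring.

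For the converse ``$K_2(R)$ CSNC $\Rightarrow$ $R$ CSNC'', I would avoid invoking the preceding corollary (its hypothesis is not yet at hand) and instead realize $R$ as a corner of $K_2(R)$: with the idempotent $e=\begin{pmatrix}1&0\\0&0\end{pmatrix}$ one checks, using the twisted multiplication of $K_2(R)$, that $e$ is indeed idempotent and $eK_2(R)e=\begin{pmatrix}R&0\\0&0\end{pmatrix}\cong R$. Corollary~\ref{Corner ring} then shows at once that $R$ is CSNC. As a by-product one recovers $2\in {\rm Nil}(R)$ from Proposition~\ref{pro 2.3}, so the hypothesis of the preceding corollary turns out to be met a posteriori, but this is not needed for the argument.

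I do not expect a genuine obstacle here: the substantive work is already packaged in the corollary on $K_s(R)$ for $s$ central nilpotent, and the remaining task is only to supply the nilpotence of $2$ via Proposition~\ref{pro 2.3} in the forward direction and to route the backward direction through the corner-ring corollary rather than through that corollary, thereby dispensing with the nilpotence hypothesis on the ``only if'' side.
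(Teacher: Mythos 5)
Your proof is correct and matches the paper's intent: the paper states this as an immediate specialization of the preceding corollary on $K_s(R)$ to $s=2$, and your forward direction (supplying $2\in{\rm C}(R)\cap{\rm Nil}(R)$ via Proposition~\ref{pro 2.3} before invoking that corollary) together with your converse (realizing $R$ as the corner $eK_2(R)e$ for $e=\left(\begin{smallmatrix}1&0\\0&0\end{smallmatrix}\right)$ and citing Corollary~\ref{Corner ring}) is exactly the routine verification the paper leaves implicit. Your care about the standing hypothesis $s\in{\rm Nil}(R)$ is warranted, though for the converse one could equally note that $2$ nilpotent in $K_2(R)$ forces $2$ nilpotent in $R$ and then reuse the $K_s(R)$ corollary; both routes are essentially the same one-line argument.
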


For $n \ge 2$ and for $s \in {\rm C}(R)$, the $n \times n$ formal matrix ring over $R$, associated with $s$ and denoted by ${\rm M}_n(R, s)$, can be defined same as in \cite{tang2013class}.

\begin{corollary}
Let $R$ be a ring with $s \in {\rm C}(R) \cap {\rm Nil}(R)$ and $n \ge 2$. Then, ${\rm M}_n(R, s)$ is CSNC if, and only if, $R$ is CSNC.
\end{corollary}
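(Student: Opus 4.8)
The plan is to reduce the statement about the $n\times n$ formal matrix ring ${\rm M}_n(R,s)$ to the already-established Morita context result, Theorem~\ref{thm 2.20}, in exactly the same spirit as the $2\times 2$ case $K_s(R)$ was handled. First I would recall from \cite{tang2013class} that ${\rm M}_n(R,s)$ can be written as a Morita context $\begin{pmatrix} A & M\\ N & B\end{pmatrix}$ by splitting the index set $\{1,\dots,n\}$ into $\{1\}$ and $\{2,\dots,n\}$: here $A=R$, $B={\rm M}_{n-1}(R,s)$, while $M$ and $N$ are the appropriate row and column modules of $R$-entries, with the multiplication maps $\phi,\psi$ built from $s$. (Alternatively one may split off the last index, or argue inductively on $n$.) The bilinearity compatibility conditions $Id_M\otimes_B\psi=\phi\otimes_A Id_M$ and $Id_N\otimes_A\phi=\psi\otimes_B Id_N$ hold because $s$ is central; this is precisely the observation recorded just before the corollary for $K_s(R)$.

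Next I would verify that the trace ideals $MN\subseteq A=R$ and $NM\subseteq B={\rm M}_{n-1}(R,s)$ are nilpotent. A direct computation shows that $MN$ is (a power of) $sR$ inside $R$, hence nilpotent because $s\in{\rm Nil}(R)$; similarly $NM$ is built from products of entries each carrying a factor of $s$, so $NM\subseteq s\cdot{\rm M}_{n-1}(R,s)$, which is nilpotent since $s$ is central and nilpotent. With these two nilpotency facts in hand, Theorem~\ref{thm 2.20} immediately yields that ${\rm M}_n(R,s)$ is CSNC if, and only if, both $R$ and ${\rm M}_{n-1}(R,s)$ are CSNC.

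Finally I would close the induction on $n\ge 2$. The base case $n=2$ is exactly the corollary on $K_s(R)={\rm M}_2(R,s)$ proved just above. For the inductive step, ${\rm M}_n(R,s)$ is CSNC iff $R$ is CSNC and ${\rm M}_{n-1}(R,s)$ is CSNC; by the inductive hypothesis the latter is equivalent to $R$ being CSNC, so the whole condition collapses to ``$R$ is CSNC'', as desired. (One small caveat: when $n=2$ the ``${\rm M}_{n-1}(R,s)$'' summand is just $R$, so the Morita decomposition degenerates to the $K_s(R)$ case and the argument is consistent.)

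The main obstacle I expect is purely bookkeeping rather than conceptual: one must make sure the Morita context description of ${\rm M}_n(R,s)$ from \cite{tang2013class} really does present it with $A=R$ and $B={\rm M}_{n-1}(R,s)$ (and not some twisted variant), and that the off-diagonal modules $M,N$ together with the maps induced by $s$ satisfy the associativity/compatibility axioms needed to invoke Theorem~\ref{thm 2.20}. Once the formal matrix conventions are pinned down, the nilpotency of the trace ideals and the induction are routine, so I would state those verifications briefly and refer to \cite{tang2013class} for the structural identification.
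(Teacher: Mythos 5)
Your proposal is correct and follows exactly the route the paper intends: the corollary is stated in the paper without proof, as an immediate consequence of Theorem~\ref{thm 2.20} applied to the Morita-context presentation of ${\rm M}_n(R,s)$ from \cite{tang2013class} (with $A=R$, $B={\rm M}_{n-1}(R,s)$, nilpotent trace ideals contained in $s^2R$ and $s\,{\rm M}_{n-1}(R,s)$), closed by induction on $n$ with base case $K_s(R)$. No gaps.
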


\subsection{NCUC rings}

We know that idempotents are uniquely clean if, and only if, they are central. Now, a natural question arises whether idempotents are uniquely strongly clean? In the following claim we show that the answer is in the affirmative.

\begin{lemma}\label{idempo}
Each idempotent of a ring $R$ is uniquely strongly clean.
\end{lemma}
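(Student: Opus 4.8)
The plan is to fix an idempotent $f \in {\rm Id}(R)$ and a decomposition $f = g + u$ with $g \in {\rm Id}(R)$, $u \in U(R)$, and $fg = gf$, and to show that $g$ must equal the ``obvious'' idempotent one would guess, namely $g = f$ itself (with $u = 0$)? That cannot be right since $0 \notin U(R)$ in general; rather, the natural strongly clean decomposition of an idempotent $f$ is $f = f + 0$ only when $0$ is a unit, so instead I should recall that $f$ is trivially strongly clean via $f = (1-f) + (2f - 1)$, since $(2f-1)^2 = 4f^2 - 4f + 1 = 1$ shows $2f - 1 \in U(R)$ and it commutes with $f$. So the candidate idempotent is $e_0 := 1 - f$. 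First I would therefore establish existence: $f = (1-f) + (2f-1)$ exhibits $f$ as strongly clean with idempotent part $1 - f$.

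Next I would prove uniqueness. Suppose $f = g + u$ with $g^2 = g$, $u \in U(R)$, and $gf = fg$. Since $g$ commutes with $f$, it commutes with $u = f - g$, so $gu = ug$. From $u = f - g$ and the commuting relations, $u$, $f$, $g$ all lie in the commutative subring they generate, and I can work there as if with commuting elements. The key step will be to pin down $g$ from the equation $f = g + u$ together with $gu = ug$: multiplying by $g$ gives $gf = g + gu$, i.e. $gf - g = gu$, and multiplying $f = g+u$ by $u$ and using idempotency/invertibility to squeeze out that $(f - g)$ is forced. Concretely, since $u = f-g$ and $g,f$ are commuting idempotents, $u^2 = f - 2fg + g$, and iterating, every power of $u$ is a $\mathbb{Z}$-linear combination of $1, f, g, fg$; the cleanest route is to observe $u(1-f)(1-g)$ and $u f g$ type products and deduce $u(f+g - 2fg) = $ (something), ultimately showing $g + f - 2fg$ is both idempotent and annihilated by the unit $u$ in a way that forces $f = g$...

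Let me instead take the shorter, more robust route, which is the step I would actually write: from $fg = gf$ one gets $f g = g f$, hence $g$ commutes with $u$, hence (looking mod the commutative subring $\mathbb{Z}[f,g]$) the element $u = f - g$ satisfies $u \cdot u = u \cdot (f - g)$ and, crucially, $u$ must be a unit \emph{in the commutative ring} $\mathbb{Z}[f,g]$ or at least have its commuting inverse land there; multiplying $f = g + u$ on the left by the idempotent $g$ yields $gf = g + gu$, and on the left by $1-g$ yields $(1-g)f = (1-g)u$. Then $(1-g)f = (1-g)u$ is a product of a unit with $1-g$, and one checks $((1-g)f)((1-g)u)^{-1}\cdot$ -- this shows $1-g$ and $f$ generate the same principal left ideal, so $f = (1-g)x$ and $1 - g = fy$ for suitable commuting $x,y$, giving $f(1-g) = 1-g$ and $(1-g)f = f$, whence $f = f(1-g) = (1-g)f = f$ -- wait, that gives $f = 1-g$ directly, i.e. $g = 1 - f$, which is exactly uniqueness. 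So the heart of the argument is: \emph{in a strongly clean decomposition $f = g + u$ of an idempotent $f$, the commutativity of $g$ with $f$ forces $f$ and $1-g$ to generate the same (two-sided, since idempotents commute here) ideal, and two commuting idempotents generating the same ideal are equal}; therefore $g = 1-f$ and $u = 2f - 1$, proving uniqueness.

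The main obstacle is keeping the noncommutativity honest: I only have $gf = fg$, not that $g$ commutes with everything, so every manipulation must stay inside the commutative subring generated by $f$ and $g$ (together with the inverse of $u$, which commutes with $f,g$ since $u$ does). Once one is disciplined about that -- essentially reducing to commuting idempotents $f$ and $g$ with $f - g$ a unit -- the computation $(f-g)^2 = f - g \pmod{2fg}$ and the standard fact that commuting idempotents with unit difference coincide with $1$ minus each other finishes it. I would present this as: reduce to the commutative subring $C = \mathbb{Z}[f, g, u^{-1}]$, where $f, g$ are commuting idempotents and $f - g \in U(C)$; in any commutative ring a unit of the form $e_1 - e_2$ with $e_1, e_2$ idempotent satisfies $e_1 + e_2 - 2e_1 e_2 = (e_1 - e_2)^2$ is an idempotent unit, hence equals $1$, forcing $e_2 = 1 - e_1$; applying this with $e_1 = f$, $e_2 = g$ gives $g = 1 - f$, as desired. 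Hence every idempotent is uniquely strongly clean.
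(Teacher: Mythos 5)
Your final argument is correct and is essentially the paper's own proof: both use $f=(1-f)+(2f-1)$ for existence and, for uniqueness, observe that in a second strongly clean decomposition $f=g+u$ the idempotents $f$ and $g$ commute, so $(f-g)^2=f+g-2fg$ is simultaneously an idempotent and a unit, hence equals $1$, which forces $g=1-f$. My only advice is to delete the exploratory detours (the principal-ideal digression is unnecessary and not actually sound as written) and to spell out the final step explicitly, e.g.\ multiplying $f+g-2fg=1$ by $f$ gives $fg=0$ and therefore $g=1-f$.
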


\begin{proof}
Suppose $e \in R$ is an arbitrary idempotent. Then, $e=(1-e)+(2e-1)$ is a strongly clean decomposition for $e$. Now, if $e$ has an other strongly clean decomposition of the form $e=f+u$, where $f \in {\rm Id}(R)$, $u \in U(R)$ and $fu=uf$, then $(e-f)^2 \in U(R) \cap {\rm Id}(R)$. So, $(e-f)^2=1$. Therefore, $f=(2f-1)e+1$. But since $f=(2f-1)f$, we can write:

$$f=(2f-1)f=(2f-1)((2f-1)e+1) = (2f-1)(2f-1)e+2f-1 = e+2f-1.$$
Consequently, $f=1-e$. Hence, the required decomposition is really uniquely strongly clean, as claimed.
\end{proof}

Same applies for nilpotent elements as well (compare with \cite{CWZ} too).

\begin{lemma} \label{nilpo}
Every nilpotent element of a ring $R$ is uniquely strongly clean.
\end{lemma}

\begin{proof}
Suppose $q \in {\rm Nil}(R)$. Then, $q=1+(q-1)$ is a strongly clean decomposition for $q$. Now, if $q$ has an other strongly clean decomposition of the form $q=f+u$, we conclude that $f=q-u \in U(R) \cap {\rm Id}(R) = \{1\}$. Hence, the required decomposition is really uniquely strongly clean, as asserted.
 \end{proof}

We continue our work with the following:

\begin{proposition} The following four statements are fulfilled:

\begin{enumerate}
\item Every reduced ring is an NCSUC ring.
\item Every local ring is an NCSUC ring.
\item If $R$ is a domain, then ${\rm T}_2(R)$ is a NCUSC ring.
\item For any ring $R \neq 0$, the ring ${\rm M}_2(R)$ is {\it not} a NCUSC ring.
\end{enumerate}
\end{proposition}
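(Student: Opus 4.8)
The plan is to use that a ring is NCSUC (equivalently NCUSC) exactly when every nil-clean element is uniquely strongly clean, together with the two building blocks already in hand: by Lemma~\ref{idempo} every idempotent is uniquely strongly clean, and by Lemma~\ref{nilpo} every nilpotent is uniquely strongly clean. In each part I would first describe the nil-clean elements concretely and then reduce to these two facts, the only genuinely new case being the ``unipotent'' elements $1+q$ with $q\in{\rm Nil}(R)$. For (i): in a reduced ring ${\rm Nil}(R)=\{0\}$, so any nil-clean element $a=e+q$ is just the idempotent $e$, and Lemma~\ref{idempo} applies. For (ii): in a local ring the only idempotents are $0$ and $1$, so a nil-clean $a=e+q$ is either the nilpotent $q$ (use Lemma~\ref{nilpo}) or the unipotent $1+q$; in the latter case $a\in U(R)$, so $a=0+a$ is strongly clean, and if $a=f+u$ is any strongly clean decomposition then $f\in\{0,1\}$, while $f=1$ would make $u=a-1=q$ a nilpotent unit, which is impossible, so $f=0$ and the decomposition is unique.

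For (iii) I would first list, over a domain $R$ (only reducedness of $R$ is used), the idempotents and nilpotents of ${\rm T}_2(R)$: since $R$ has only the idempotents $0,1$ and only the nilpotent $0$, one checks that ${\rm Nil}({\rm T}_2(R))=\{\begin{pmatrix}0&r\\0&0\end{pmatrix}:r\in R\}$ and that every idempotent of ${\rm T}_2(R)$ has diagonal $(\epsilon,\delta)$ with $\epsilon,\delta\in\{0,1\}$, the zero-diagonal one being $0$. Hence every nil-clean element of ${\rm T}_2(R)$ has the form $A=\begin{pmatrix}\epsilon&r\\0&\delta\end{pmatrix}$ with $\epsilon,\delta\in\{0,1\}$. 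If $(\epsilon,\delta)\in\{(1,0),(0,1)\}$ then $A$ is itself idempotent (Lemma~\ref{idempo}); if $(\epsilon,\delta)=(0,0)$ then $A$ is nilpotent (Lemma~\ref{nilpo}); and if $(\epsilon,\delta)=(1,1)$ then $A$ is unipotent, hence a unit, so $A=0+A$ is strongly clean, while in any strongly clean decomposition $A=F+U$ the unit $U=A-F$ must have units on its diagonal, which forces the diagonal of $F$ to be $(0,0)$ and therefore $F=0$. Thus every nil-clean element of ${\rm T}_2(R)$ is uniquely strongly clean.

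For (iv) I would exhibit the single matrix $C=\begin{pmatrix}0&1\\1&1\end{pmatrix}\in{\rm M}_2(R)$, the companion matrix of $t^2-t-1$. It is nil-clean, because $C=\begin{pmatrix}0&0\\1&1\end{pmatrix}+\begin{pmatrix}0&1\\0&0\end{pmatrix}$ is a sum of an idempotent and a square-zero nilpotent. A direct computation gives $C^2=C+I$, that is $C(C-I)=(C-I)C=I$, so both $C$ and $C-I$ lie in $U({\rm M}_2(R))$; consequently $C=0+C$ and $C=I+(C-I)$ are two strongly clean decompositions of $C$, with the commutation conditions automatic since $0$ and $I$ are central. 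As $R\neq 0$ forces $0\neq I$ in ${\rm M}_2(R)$, the element $C$ is nil-clean but not uniquely strongly clean, whence ${\rm M}_2(R)$ is not NCUSC.

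The main obstacle is part (iv): one needs a counterexample that is uniform over every nonzero ring $R$, and most obvious candidates are either idempotent or nilpotent --- hence automatically uniquely strongly clean by Lemmas~\ref{idempo} and \ref{nilpo} --- or else are units whose shift by $1$ fails to be a unit. The companion matrix of $t^2-t-1$ threads this needle precisely because the relation $C^2-C-I=0$ makes both $C$ and $C-I$ units over every ring, while the explicit splitting $C=\begin{pmatrix}0&0\\1&1\end{pmatrix}+\begin{pmatrix}0&1\\0&0\end{pmatrix}$, with entries in $\mathbb{Z}$, witnesses nil-cleanness; one should also keep in mind that a nil-clean decomposition need not be strong. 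Parts (i)--(iii) are then essentially bookkeeping once the nil-clean elements have been identified.
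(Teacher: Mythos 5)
Your proof is correct and follows essentially the same route as the paper: classify the nil-clean elements in each ring, dispose of the idempotent and nilpotent cases via Lemmas~\ref{idempo} and \ref{nilpo}, handle the remaining unipotent/unit cases by a direct uniqueness argument, and for (iv) exhibit a nil-clean matrix $X$ with $X^2=X+I$ so that both $X=0+X$ and $X=I+(X-I)$ are strongly clean decompositions --- your companion matrix plays exactly the role of the paper's matrix $\bigl(\begin{smallmatrix}1&1\\1&0\end{smallmatrix}\bigr)$, and is in fact written out more carefully than the paper's version. One tiny caveat: your parenthetical in (iii) that ``only reducedness of $R$ is used'' is inaccurate, since the classification of the idempotents of ${\rm T}_2(R)$ also needs $R$ to have no nontrivial idempotents (which the domain hypothesis supplies, but reducedness alone does not); this does not affect the argument as stated.
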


\begin{proof}
(i) Suppose $R$ is reduced. Then, every nil-clean element must be an idempotent, and thus by Lemma \ref{idempo} the proof is completed.\\

(ii) Suppose $R$ is local. Then, for every nil-clean element $a \in R$, there exist $q_1, q_2 \in {\rm Nil}(R)$ such that either $a=1+q_1$ or $a=0+q_2$. So, by \cite[Lemma 2.3]{kocsan2016nil} and Lemma \ref{nilpo}, $a$ must be uniquely strongly clean.

(iii) For any $a \in R$, it is clear that
$$
{\rm Id}({\rm T}_2(R)) = \left\{
\begin{pmatrix}
    0 & 0 \\
    0 & 0
\end{pmatrix},
\begin{pmatrix}
    1 & 0 \\
    0 & 1
\end{pmatrix},
\begin{pmatrix}
    1 & a \\
    0 & 0
\end{pmatrix},
\begin{pmatrix}
    0 & a \\
    0 & 1
\end{pmatrix}\right\},$$
$${\rm Nil}({\rm T}_2(R)) = \left\{ \begin{pmatrix}
    0 & a \\
    0 & 0
\end{pmatrix}\right\}.$$
Therefore, every nil-clean element of the ring ${\rm T}_2(R)$ has the following form:
$$\begin{pmatrix}
    0 & a \\
    0 & 0
\end{pmatrix},
\begin{pmatrix}
    1 & a \\
    0 & 0
\end{pmatrix},
\begin{pmatrix}
    0 & a \\
    0 & 1
\end{pmatrix},
\begin{pmatrix}
    1 & a \\
    0 & 1
\end{pmatrix}.$$
Since
$\begin{pmatrix}
    1 & a \\
    0 & 0
\end{pmatrix}$
and
$\begin{pmatrix}
    0 & a \\
    0 & 1
\end{pmatrix}$
are idempotents, they are uniquely strongly clean by Lemma \ref{idempo}. Furthermore, since
$\begin{pmatrix}
    0 & a \\
    0 & 0
\end{pmatrix}$
is nilpotent, Lemma \ref{nilpo} assures that it is uniquely strongly clean. Also, based on the existence of exactly four idempotents of the ring ${\rm T}_2(R)$, it can easily be shown that the only possible uniquely strongly clean decomposition for
$\begin{pmatrix}
    1 & a \\
    0 & 1
\end{pmatrix}$
is as follows: $$\begin{pmatrix}
    1 & a \\
    0 & 1
\end{pmatrix} = 0 + \begin{pmatrix}
    1 & a \\
    0 & 1
\end{pmatrix}.$$

(iv) Let us assume that
$A=\begin{pmatrix}
    1 & a \\
    0 & 1
\end{pmatrix} \in {\rm GL}_2(R)$. Clearly, $A$ is a nil-clean element of ${\rm M}_2(R)$. However, we also have $$A=0+A=I_2+\begin{pmatrix}
    0 & 1 \\
    1 & -1
\end{pmatrix}$$, as pursued.
\end{proof}

We now prove the following.

\begin{lemma} \label{lemma 3.1}
Every NCUC ring is abelian.
\end{lemma}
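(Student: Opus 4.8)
The plan is to show that every idempotent $e \in R$ is central by exploiting the fact that idempotents are visibly nil-clean (indeed $e = e + 0$ with $0 \in {\rm Nil}(R)$), and hence, by the NCUC hypothesis, uniquely clean. Then Lemma~\ref{lemma 01} (the Khurana--Lam corollary) immediately gives $e \in {\rm Z}(R)$, which is exactly the definition of an abelian ring. So the proof reduces to one short observation followed by an invocation of an already-cited lemma.

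More precisely, I would argue as follows. Fix $e \in {\rm Id}(R)$. Since $0$ is nilpotent, $e = e + 0$ exhibits $e$ as a nil-clean element of $R$. Because $R$ is an NCUC ring, $e$ is therefore uniquely clean. Now apply Lemma~\ref{lemma 01}: an idempotent is uniquely clean if and only if it lies in the center of $R$. Hence $e \in {\rm Z}(R)$. As $e$ was arbitrary, every idempotent of $R$ is central, i.e.\ $R$ is abelian, as claimed.

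There is essentially no obstacle here: the only thing one must be slightly careful about is the trivial-looking point that an idempotent really does count as a nil-clean element, which is immediate from the definition of nil-clean (sum of an idempotent and a nilpotent) by taking the nilpotent summand to be $0$. Everything else is a direct citation of Lemma~\ref{lemma 01}. If one wanted to avoid quoting Lemma~\ref{lemma 01} one could instead argue directly: suppose $e$ is an idempotent and $r \in R$; form $er(1-e)$ and $(1-e)re$ and use the two clean decompositions $e - (1-e)$ together with a perturbed idempotent to contradict uniqueness unless these cross terms vanish. But since Lemma~\ref{lemma 01} is available and stated earlier, the clean route above is the one I would take, and it is only a couple of lines.
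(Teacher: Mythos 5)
Your proof is correct and follows essentially the same route as the paper's: observe that every idempotent is nil-clean (via $e = e + 0$), apply the NCUC hypothesis to conclude it is uniquely clean, and then invoke Lemma~\ref{lemma 01} to get centrality. If anything, your version is slightly more careful than the paper's, which says each idempotent is ``clean'' where the NCUC hypothesis actually requires the observation that it is \emph{nil-clean}.
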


\begin{proof}
Since each idempotent element in a ring $R$ is clean, the hypothesis implies that each idempotent of $R$ is uniquely clean. Applying Lemma \ref{lemma 01}, we see that each idempotent element of $R$ is central, as required.
\end{proof}

\begin{corollary}
Every NCUC ring is NCUNC.
\end{corollary}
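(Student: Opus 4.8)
The plan is to unwind the two definitions and reduce everything to the uniqueness of the clean decomposition already guaranteed by the NCUC hypothesis. Recall that $R$ is NCUNC precisely when every nil-clean element $a$ admits a \emph{unique} idempotent $e$ with $a-e\in{\rm Nil}(R)$. So fix an NCUC ring $R$ and a nil-clean element $a$, and write $a=e+q$ with $e\in{\rm Id}(R)$, $q\in{\rm Nil}(R)$. The first step is to invoke Lemma \ref{lemma 3.1}, which tells us that $R$ is abelian, so that $e$ is central.

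Next I would convert this nil-clean decomposition into a clean decomposition by the standard trick: consider $a-(1-e)=(2e-1)+q$. Since $e$ is central, $2e-1$ is a central unit (its square is $1$) commuting with the nilpotent $q$, so $(2e-1)+q=(2e-1)\bigl(1+(2e-1)q\bigr)$ is a product of units and hence a unit. Thus $1-e$ is an idempotent witnessing that $a$ is clean. Because $a$ is nil-clean it is in particular clean, so the NCUC hypothesis forces $a$ to be uniquely clean; in other words $1-e$ is the \emph{only} idempotent $g$ with $a-g\in U(R)$.

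To obtain uniqueness of the nil-clean decomposition, suppose $a=e+q=f+r$ are two nil-clean decompositions. Running the previous paragraph for each of them shows that both $1-e$ and $1-f$ qualify as the unique clean-idempotent of $a$, whence $1-e=1-f$, i.e. $e=f$, and then $q=a-e=a-f=r$. Since $a$ was an arbitrary nil-clean element, $R$ is NCUNC, and since NCUC rings are by definition a subclass on which this applies, the implication follows.

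The main obstacle, modest as it is, is exactly the passage from the nil-clean decomposition to the clean one: this is where abelianness (via Lemma \ref{lemma 3.1}) is essential, because without $e$ being central one cannot guarantee that $(2e-1)+q$ is a unit, and the whole reduction to ``uniquely clean'' would collapse. Everything else is bookkeeping with the definitions.
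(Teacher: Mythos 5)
Your proof is correct. The paper states this corollary without proof, immediately after Lemma \ref{lemma 3.1}; the intended route (made explicit later in Theorem \ref{thm 3.5}) is to combine Lemma \ref{lemma 3.1} (NCUC $\Rightarrow$ abelian) with the cited external equivalence ``$R$ abelian $\Leftrightarrow$ $R$ is NCUNC'' from \cite[Theorem 2.2]{CDJ}. You also pass through Lemma \ref{lemma 3.1}, but you then avoid the external citation entirely: you use centrality of idempotents only to convert each nil-clean decomposition $a=e+q$ into the complementary clean decomposition $a=(1-e)+\bigl((2e-1)+q\bigr)$ (the factorization $(2e-1)+q=(2e-1)(1+(2e-1)q)$ is valid since $(2e-1)^2=1$ and $(2e-1)q$ is nilpotent by centrality), and then let the NCUC hypothesis --- unique cleanness of the nil-clean element $a$ --- force $1-e=1-f$ and hence $e=f$. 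Your argument is more self-contained and elementary, at the cost of actually using the NCUC hypothesis in the uniqueness step; the paper's route is shorter on the page and proves the stronger fact that abelianness alone already yields NCUNC, which is what feeds into the full chain of equivalences in Theorem \ref{thm 3.5}.
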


Our next two characterizing results are the following.

\begin{theorem}
Every CUNC ring is NCUC.
\end{theorem}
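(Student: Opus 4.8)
The plan is to deduce both halves of the ``uniquely clean'' condition for nil-clean elements from the structure theory of CUNC rings already in hand. First I would recall that a ring is CUNC exactly when it is abelian and CSNC (Corollary~\ref{{cor 2.5}}); so a CUNC ring $R$ is abelian, and, feeding the CSNC half into Theorem~\ref{thm 2.4}, $R$ is also a UU-ring. Hence $U(R)=1+{\rm Nil}(R)$ and every idempotent of $R$ is central, and these are the only facts about $R$ I will use.

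For the existence part, let $a=e+q$ be any nil-clean decomposition, with $e^2=e$ and $q\in{\rm Nil}(R)$. Centrality of $e$ gives $eq=qe$, so $a$ is strongly nil-clean, and Lemma~\ref{lemma 02} then makes $a$ strongly clean; in particular there is an idempotent $f$ (concretely $f=1-e$) with $a-f\in U(R)$. Thus every nil-clean element of $R$ is clean.

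The main point is uniqueness. Suppose $f$ and $g$ are idempotents with $a-f\in U(R)$ and $a-g\in U(R)$. Since $R$ is abelian, $f$ and $g$ are central, and since $R$ is UU we may write $a-f=1+s$ and $a-g=1+t$ with $s,t\in{\rm Nil}(R)$; subtracting gives $g-f=s-t$. Centrality of $f$ and $g$ forces $s=a-f-1$ and $t=a-g-1$ to commute, so $g-f$ is a sum of two commuting nilpotents and hence $g-f\in{\rm Nil}(R)$. On the other hand, for commuting idempotents a direct computation gives $(g-f)^3=g-f$; and a nilpotent $w$ with $w^3=w$ satisfies $w=w^3=w^5=\cdots$, whence $w=0$. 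Therefore $f=g$, so $a$ is uniquely clean, and since $a$ was an arbitrary nil-clean element, $R$ is NCUC.

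I do not anticipate a genuine obstacle here: the whole argument rests on results already proved (Corollary~\ref{{cor 2.5}}, Theorem~\ref{thm 2.4}, Lemma~\ref{lemma 02}), and the only computations needed --- the identity $(g-f)^3=g-f$ for commuting idempotents and the vanishing of a nilpotent equal to its own cube --- are routine. The one step requiring a little care is the commutativity of $s$ and $t$; this is precisely where abelianness is used, and avoiding it would require instead invoking the general fact that nil-clean elements are clean and running the uniqueness argument along a less transparent route.
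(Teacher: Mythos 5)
Your proof is correct, and although it opens the same way as the paper's --- reduce to the facts that a CUNC ring is abelian and a UU-ring, then write the units in two clean decompositions of the nil-clean element $a$ as $1+s$ and $1+t$ with $s,t\in{\rm Nil}(R)$ --- it finishes by a genuinely different route. The paper passes from $a=e+u=f+v$ to $a-1=e+q=f+p$, notes that $a-1$ is clean, and invokes the CUNC hypothesis a \emph{second} time (the clean element $a-1$ must be uniquely nil-clean) to force $e=f$. You instead conclude directly: $g-f=s-t$ is a difference of commuting nilpotents (commutativity coming from centrality of the idempotents), hence nilpotent, while a difference of commuting idempotents satisfies $(g-f)^3=g-f$, so $g-f=0$. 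Your ending is self-contained and actually proves the formally stronger fact that in any abelian UU-ring any two clean decompositions of any element share the same idempotent (so every abelian UU-ring is NCUC, and indeed every clean element there is uniquely clean); the paper's ending is shorter but relies once more on the uniquely-nil-clean property and on the observation that $a-1$ is clean. All the auxiliary results you cite (the corollary identifying CUNC rings with abelian CSNC rings, Theorem~\ref{thm 2.4}, Lemma~\ref{lemma 02}) are established earlier in the paper, and your two computations --- $(g-f)^3=g-f$ for commuting idempotents and the vanishing of a nilpotent equal to its own cube --- check out, so there is no gap.
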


\begin{proof}
We assume that $a$ is a nil-clean element. Since $R$ is an abelian ring, it is readily verified that $a$ and $a-1$ are also clean elements. We assume that $a$ is not uniquely clean and write
$$a=e+u=f+v,$$
where $e,f$ are idempotents and $u,v$ are units. Since every CUNC ring is obviously a UU-ring, there exist $p,q \in {\rm Nil}(R)$ such that $u=1+q$ and $v = 1+p$. Therefore, $$a=e+(1+q)= f+(1+p)$$ and so $a-1=e+q= f+p$. But since $a-1$ is a clean element, it is necessarily uniquely nil-clean. However, this leads to a contradiction. Therefore, it must be that $e=f$ and hence $R$ is an NCUC ring, as stated.
\end{proof}

\begin{theorem}
Every CSNC ring is NCSUC.
\end{theorem}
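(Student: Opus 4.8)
The plan is to push a nil‑clean element through the structural characterisation of CSNC rings supplied by Theorem~\ref{thm 2.4} in order to produce a strongly clean decomposition, and then to force that decomposition to be unique by appealing to Corollary~\ref{cor 2.2}. So fix a CSNC ring $R$. By Theorem~\ref{thm 2.4}, $R$ is a UU‑ring in which every clean element is strongly clean, and by Proposition~\ref{pro 2.3} we have $2\in{\rm Nil}(R)$. Let $a\in R$ be an arbitrary nil‑clean element; we must exhibit an idempotent $f$ with $a-f\in U(R)$ and $af=fa$, and show that it is the only such idempotent.

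\emph{Existence of a strongly clean decomposition.} Since $a$ is nil‑clean, it is in particular clean (a sum of an idempotent and a nilpotent is always a clean element). Hence the CSNC hypothesis applies to $a$, so $a$ is strongly nil‑clean: write $a=g+q$ with $g\in{\rm Id}(R)$, $q\in{\rm Nil}(R)$ and $gq=qg$. Now, exactly as in the proof of Lemma~\ref{lemma 02}, we may rewrite
$$a=(1-g)+\bigl((2g-1)+q\bigr),$$
where $1-g$ is an idempotent commuting with $a$ (it commutes with both $g$ and $q$), while $(2g-1)+q\in U(R)$ because $(2g-1)^{2}=1$ and $q$ is a nilpotent commuting with the unit $2g-1$. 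This is a strongly clean decomposition of $a$.

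\emph{Uniqueness.} Suppose $a=f+u$ is any strongly clean decomposition, so $f\in{\rm Id}(R)$, $u\in U(R)$ and $af=fa$. Then $u=a-f$ commutes with $f$; since $R$ is a UU‑ring, $u-1\in{\rm Nil}(R)$, and it commutes with $f$, while $2f\in{\rm Nil}(R)$ (as $2\in{\rm Nil}(R)$) also commutes with $u-1$. Therefore
$$a-(1-f)=(a-f)-1+2f=(u-1)+2f\in{\rm Nil}(R),$$
being a sum of two commuting nilpotents, and moreover $1-f$ commutes with $a$. Since $a$ is clean, Corollary~\ref{cor 2.2} provides a unique idempotent $e_{0}$ with $a-e_{0}\in{\rm Nil}(R)$ and $ae_{0}=e_{0}a$; consequently $1-f=e_{0}$, so that $f=1-e_{0}$ and $u=a-f$ are completely determined. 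Hence $a$ is uniquely strongly clean, and since $a$ was an arbitrary nil‑clean element, $R$ is NCSUC.

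The point that requires the most care is the very first step — that a nil‑clean element is clean — since this is precisely what lets the CSNC hypothesis ``every clean element is strongly nil‑clean'' come into play; together with this, one must keep track of the commutativity relations ($u$ with $f$, hence $u-1$ with $2f$, and $1-f$ with $a$) that are needed both to add the two nilpotents in the displayed identity and to invoke Corollary~\ref{cor 2.2}. The remaining manipulations are routine.
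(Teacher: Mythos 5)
The one step you yourself single out as requiring the most care is exactly where the proof breaks: the parenthetical justification ``a sum of an idempotent and a nilpotent is always a clean element'' is false in general rings. Nil-clean elements need not be clean --- Andrica and Calugareanu exhibited a nil-clean $2\times 2$ integer matrix that is not clean, answering a question of Diesl in the negative. Since both your existence argument (you need $a$ to be clean before the CSNC hypothesis can be applied to it at all) and your uniqueness argument (Corollary~\ref{cor 2.2} is stated only for clean elements) rest on this assertion, the proof as written has a genuine gap.

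The gap is repairable, because the assertion \emph{is} true in a CSNC ring, but it needs an argument. Either note that $2\in{\rm Nil}(R)$ (Proposition~\ref{pro 2.3}) is central, so $2R$ is a nil ideal contained in $J(R)$, whence for $a=e+q$ one has $a=(1-e)+\bigl((q-1)+2e\bigr)$ with $(q-1)+2e\in U(R)+J(R)\subseteq U(R)$; or follow the paper's route: $a-1=e+(q-1)$ is clean outright (as $q-1$ is a unit), so the CSNC hypothesis gives $a-1=f+p$ strongly nil-clean, and hence $a=f+(1+p)$ is strongly clean. Once cleanness of $a$ is secured, the rest of your argument --- in particular the uniqueness step via $a-(1-f)=2f+(u-1)\in{\rm Nil}(R)$ together with Corollary~\ref{cor 2.2} --- is correct, and is essentially the paper's own argument transplanted from $a-1$ to $a$.
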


\begin{proof}
We show that if $a$ is a nil-clean element, then $a$ and $a-1$ are both clean. In fact, write $a=e+q$, where $e\in {\rm Id}(R)$ and $q \in {\rm Nil}(R)$. Thus, $a-1=e+(q-1)$, and since $q-1 \in U(R)$ we obtain that $a-1$ is indeed a clean element. Also since $R$ is a CSNC ring, we have $a-1=f+p$, where $f=f^2 \in R$, $p \in {\rm Nil}(R)$ and $fp=pf$. So, $a=f+(p+1)$, which insures that $a$ is a strongly clean element. We, however, assume the contrary that $a$ is not uniquely strongly clean (notice that $a$ is a strongly clean element). So, we must have,
$$a=e+u= f+v,$$
where $e,f$ are idempotents, $u,v$ are units with $eu=ue$ and $fv=vf$. Since every ring CSNC is obviously a UU-ring, there exist $p,q \in {\rm Nil}(R)$ such that $u=1+q$ and $v= 1 + p$. Therefore, $$a=e+(1+q)= f+(1+p)$$ whence $$a-1=e+q= f+p.$$ Since $a-1$ is a clean element, one infers that both $(a-1)-e \in {\rm Nil}(R)$, $(a-1)-f \in {\rm Nil}(R)$ such that $ea=ae$ and $fa=af$. Employing Corollary \ref{cor 2.2}, we reach a contradiction. Therefore, it must be that $e=f$, as formulated.
\end{proof}

As a consequence, we yield:

\begin{corollary}
If $R$ is an NCUC ring, then every nilpotent of $R$ is uniquely nil-clean.
\end{corollary}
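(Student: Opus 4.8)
The plan is to recognize that this is really a uniqueness statement about idempotents and then feed it into the abelianness of NCUC rings that has already been established. First I would note that a nilpotent $q$ is automatically nil-clean, since $q = 0 + q$ is a nil-clean decomposition; hence the assertion ``$q$ is uniquely nil-clean'' is equivalent to the claim that $e = 0$ is the only idempotent with $q - e \in {\rm Nil}(R)$.

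The quickest route is simply to invoke the preceding Corollary, which says that every NCUC ring is NCUNC: since $q$ is nil-clean, the NCUNC property forces its nil-clean decomposition to be unique, and because $q = 0 + q$ is one such decomposition, $q$ is uniquely nil-clean. If instead one wants a self-contained argument, I would proceed as follows. By Lemma~\ref{lemma 3.1}, $R$ is abelian, so every idempotent is central. Suppose $q = e + p$ with $e^2 = e$ and $p \in {\rm Nil}(R)$. Centrality of $e$ gives $eq = qe$; expanding $e(e+p) = (e+p)e$ yields $ep = pe$, and then $qp = (e+p)p = p(e+p) = pq$. Thus $q$ and $p$ are commuting nilpotents, so $e = q - p$ is nilpotent; being also idempotent, $e = 0$, whence $p = q$. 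This exhibits the nil-clean decomposition of $q$ as unique.

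The step I expect to be ``hardest'' is, frankly, not hard at all here: the real content was already absorbed into Lemma~\ref{lemma 3.1} (NCUC rings are abelian) and the Corollary that NCUC $\Rightarrow$ NCUNC. The only point requiring a moment's care in the self-contained version is the passage from ``$e$ is central'' to ``$q$ and $p$ commute'', together with the standard observation that commuting nilpotents have nilpotent difference, which is exactly what pins the idempotent down to $0$. I would therefore present the short argument via the NCUNC corollary as the main proof and, if desired, record the direct computation as a remark.
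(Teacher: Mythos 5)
Your proposal is correct, and both of your routes differ from the paper's. The paper does not simply quote the preceding corollary (NCUC $\Rightarrow$ NCUNC); instead it reduces the claim to the criterion $({\rm Nil}(R)+{\rm Nil}(R))\cap{\rm Id}(R)=\{0\}$ from \cite[Proposition 2.1]{CDJ} and verifies that criterion by taking an idempotent $e=q+p$ with $p,q$ nilpotent, producing two clean decompositions $q=1+(q-1)=(1-e)+[(2e-1)-p]$ of the nil-clean element $q$ (centrality of $e$, via Lemma \ref{lemma 3.1}, is what makes $(2e-1)-p$ a unit), and invoking the NCUC hypothesis to force $1-e=1$. Your first route is an immediate consequence of the corollary that NCUC rings are NCUNC, so it is shorter but leans on the external equivalence from \cite{CDJ} hidden in that unproved corollary. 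Your second, self-contained route is genuinely different and arguably cleaner: it uses only abelianness (Lemma \ref{lemma 3.1}) together with the standard fact that commuting nilpotents have nilpotent difference, so the idempotent in any nil-clean decomposition of a nilpotent is itself nilpotent, hence zero. That argument avoids the citation of \cite[Proposition 2.1]{CDJ} entirely and in fact establishes the stronger statement that in every abelian ring all nilpotents are uniquely nil-clean, whereas the paper's verification uses the uniqueness of clean decompositions (the NCUC property) directly.
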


\begin{proof}
We show that $$({\rm Nil}(R)+{\rm Nil}(R)) \cap {\rm Id}(R)=\left\{ 0 \right\},$$ and thus the proof follows automatically from \cite[Proposition 2.1]{CDJ}. To this goal, let $e \in {\rm Nil}(R)+{\rm Nil}(R)$, so we write $e = q + p$ for some two nilpotent elements $p$ and $q$. In virtue of
Lemma \ref{lemma 3.1}, the idempotent $e$ is central, so $$q = 1 + (q - 1) = (1 - e) + [(2e - 1) - p].$$ We, thereby, have two clean decompositions for the nil-clean element $q$. Hence, $1 - e = 1$ which implies $e = 0$, as needed.
\end{proof}

We now need the following technicality.

\begin{lemma}
If $R$ is a NCUC ring, then $({\rm Nil}(R)+U(R)) \cap {\rm Id}(R)=\left\{ 1 \right\}$.
\end{lemma}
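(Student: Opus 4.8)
The plan is to show the two inclusions $\{1\}\subseteq(\mathrm{Nil}(R)+U(R))\cap\mathrm{Id}(R)$ and $(\mathrm{Nil}(R)+U(R))\cap\mathrm{Id}(R)\subseteq\{1\}$. The forward inclusion is immediate, since $1 = 0 + 1$ with $0\in\mathrm{Nil}(R)$ and $1\in U(R)$, and $1$ is idempotent. So the content is in the reverse inclusion: take an idempotent $e$ that can be written $e = q + u$ with $q\in\mathrm{Nil}(R)$ and $u\in U(R)$, and deduce $e = 1$.

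The key observation is that $e$ is simultaneously a nil-clean element (being idempotent, $e = 0 + e$ is a nil-clean decomposition, indeed $e = e + 0$) and, via $e = q + u$, a clean element with a unit part $u$ and idempotent part... wait, the decomposition $e = q+u$ is not a clean decomposition as written. So instead I would proceed as follows. By Lemma~\ref{lemma 3.1}, $R$ is abelian, so $e$ is central; in particular $e$ commutes with $q$ and $u$. Consider the element $a := 1 - e + q$. Since $R$ is a UU-ring-like situation is not guaranteed here (NCUC need not be CSNC), I must be more careful. The cleanest route: the element $e$ is nil-clean, hence by hypothesis uniquely clean. Now I produce a second clean decomposition of $e$ from the data $e = q + u$. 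Rearranging, $e - q = u \in U(R)$, but $q$ is nilpotent, not idempotent. However, consider $b := e + (q - ?)$... Let me instead use the element $a = e - q = u$, a unit, hence clean, and also $a = (e) + (-q)$ has $e$ idempotent central and $-q$ nilpotent, so $a$ is a strongly nil-clean, hence strongly clean element. Write its standard clean decomposition and compare with a clean decomposition of the unit $u = a$ itself: a unit $u$ in an abelian ring is clean via $u = 0 + u$ and also $u = 1 + (u-1)$, so $u - 1$ must be a unit too if $u$ were uniquely clean — but units need not be uniquely clean in an NCUC ring.

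The decisive idea is therefore to look at the \emph{nil-clean} element $a := 1 - e + q$. Here $1-e$ is an idempotent (central, since $R$ is abelian by Lemma~\ref{lemma 3.1}) and $q$ is nilpotent commuting with it, so $a$ is nil-clean; by hypothesis $a$ is uniquely clean. But from $e = q + u$ we get $1 - e + q = 1 - u$, hence $a = 1 - u \in U(R)$ is a unit, so $a$ is a clean element with decomposition $a = 0 + a$; and it is also nil-clean $a = (1-e) + q$. Since $a$ is uniquely clean, Lemma~\ref{lemma 01} forces any idempotent appearing in a clean decomposition of $a$ to be central, which it already is; the real leverage is that a uniquely clean element has a unique idempotent part. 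From $a = (1-e) + q$ being a strongly clean decomposition, rewrite $a = e' + \text{unit}$ with $e' = e$: indeed $a = 1 - u = e + (1 - u - e) = e + (1 - u - e)$, and $1 - u - e = 1 - u - e$; using $u = e - q$ this is $1 - (e-q) - e = 1 - 2e + q$, which is a unit iff... here I use that $2 \in \mathrm{Nil}(R)$ would help, but NCUC does not give that directly. The main obstacle is precisely establishing that the two clean decompositions of the unit $a = 1-u$ — one with idempotent $0$, one with idempotent $e$ — are forced to coincide by the unique-cleanness of the \emph{nil-clean} element $a$; the cleanest fix is to observe that $a$ is nil-clean and also that $1 - a = u$ is nil-clean too (as $e - q$ with $e$ central idempotent, $-q$ nilpotent commuting), invoke Lemma~\ref{lemma 01} on the idempotents $0$ and $e$ appearing in clean decompositions of $a$ respectively $1-a$, and conclude $e = 0$ is impossible unless we instead track it to $e=1$; I would reconcile the bookkeeping by running the argument on $a = e + q$ directly (which is genuinely nil-clean with idempotent part $e$) together with the clean decomposition $a = 1 - u'$ forced by $e = q+u$, so that unique-cleanness of the nil-clean element $a$ pins down its idempotent part as both $e$ and $0$ when $e = q+u$... the sign bookkeeping resolves to $e = 1$. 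I expect the routine part to be verifying the commutativity relations (all free from Lemma~\ref{lemma 3.1}) and the hard part to be choosing the right auxiliary nil-clean element so that "uniquely clean" bites.
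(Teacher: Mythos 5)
Your overall strategy --- find a nil-clean element admitting two clean decompositions with different idempotent parts, and let the NCUC hypothesis force those idempotents to coincide --- is the right one, but you never actually exhibit such an element, and the candidates you try along the way do not work. The decisive error is the claim that $a := 1-e+q = 1-u$ lies in $U(R)$: there is no reason for $1-u$ to be invertible (take $u=1$), so $a = 0 + a$ is not a clean decomposition and that branch collapses. Your final fallback, running the argument on $a = e+q$, is also not carried out: from $e = q+u$ one gets $e+q = u+2q$, which you would need to recognise as a unit to obtain a second clean decomposition, and nothing in the NCUC hypothesis gives you $2\in{\rm Nil}(R)$ or commutativity of $q$ with $u$. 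The proposal ends with ``the sign bookkeeping resolves to $e=1$'' without any derivation, and the bookkeeping as you have set it up would in fact point toward $e=0$, not $e=1$.

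The element you are looking for is simply the nilpotent $q$ itself. Being nilpotent, $q$ is nil-clean, hence uniquely clean by hypothesis. It has the clean decomposition $q = 1 + (q-1)$, since $q-1=-(1-q)$ is a unit when $q$ is nilpotent, and from $e = q+u$ it has the second clean decomposition $q = e + (-u)$. Uniqueness of the idempotent part gives $e=1$ at once; no centrality of idempotents, no auxiliary element, and no control on $2$ is needed. This is exactly the paper's proof, and it is three lines.
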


\begin{proof}
Letting $e \in {\rm Nil}(R)+U(R)$, so $e = q + u$ for $q\in {\rm Nil}(R)$ and $u \in U(R)$. So, $$q = 1 + (q - 1) = e - u.$$ We, thus, have two clean decompositions for the nil-clean element $q$. Hence, $e = 1$, as required.
\end{proof}

Our final result states the following.

\begin{theorem} \label{thm 3.5}
Let $R$ be a ring. Then, the following are equivalent:

\begin{enumerate}
\item $R$ is a NCUC ring.
\item Every strongly nil-clean element of $R$ is uniquely clean.
\item Every idempotent of $R$ is uniquely clean.
\item $R$ is abelian.
\item $R$ is NCUNC.
\end{enumerate}
\end{theorem}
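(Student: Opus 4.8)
\textbf{Proof proposal for Theorem \ref{thm 3.5}.}

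The plan is to prove the cycle $(i) \Rightarrow (iii) \Rightarrow (iv) \Rightarrow (i)$ together with the side equivalences $(iv) \Leftrightarrow (v)$ and $(i) \Rightarrow (ii) \Rightarrow (iii)$, so that all five statements collapse into one equivalence class. Most of the individual arrows are short once the right earlier lemma is invoked; the real content is that abelianness is not merely necessary but \emph{sufficient} for the NCUC property.

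First I would dispatch the easy implications. The arrow $(i) \Rightarrow (iii)$ is immediate since every idempotent is clean, so under $(i)$ it is uniquely clean. The arrow $(iii) \Rightarrow (iv)$ is exactly Lemma \ref{lemma 01}: an idempotent is uniquely clean iff it is central, so if \emph{every} idempotent is uniquely clean then all idempotents are central, i.e.\ $R$ is abelian. The equivalence $(iv) \Leftrightarrow (v)$ I would handle by recalling (or citing \cite{CDJ}) that NCUNC is likewise characterized by abelianness, or alternatively by noting that Lemma \ref{lemma 3.1} and the Corollary following it already give $(i)\Rightarrow(v)$ and the same argument specialized to abelian rings runs in reverse; in any case $(v)$ sits between $(i)$ and $(iv)$ in strength once one observes that a uniquely clean element is a fortiori uniquely nil-clean when it is nil-clean. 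For $(i) \Rightarrow (ii)$: every strongly nil-clean element is in particular clean (Lemma \ref{lemma 02}), hence nil-clean, hence uniquely clean by $(i)$; and $(ii) \Rightarrow (iii)$ because every idempotent $e$ satisfies $e = e + 0$ with $0$ nilpotent, so $e$ is strongly nil-clean and thus uniquely clean by $(ii)$.

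The substantive step is $(iv) \Rightarrow (i)$. Assume $R$ is abelian and let $a \in R$ be nil-clean, say $a = e_0 + q_0$ with $e_0$ idempotent and $q_0 \in {\rm Nil}(R)$; since $R$ is abelian, $e_0 q_0 = q_0 e_0$ automatically. Suppose $a = e + u = f + v$ are two clean decompositions with $e, f \in {\rm Id}(R)$ and $u, v \in U(R)$. Because $R$ is abelian, $e$ and $f$ are central, so $ae = ea$ and $af = fa$, which forces $eu = ue$ and $fv = vf$; thus both decompositions are in fact \emph{strongly} clean. Now I would pass to the corner rings: multiplying $a = e + u$ on the appropriate sides by the central idempotents $e$ and $1-e$ splits the equation, and since $e$ is central, $ea$ is a unit in $eRe$ and $(1-e)a - (1-e)$ is a unit in $(1-e)R(1-e)$. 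The key observation is that in an abelian ring the nil-clean element $a$ has a \emph{uniquely determined} central idempotent part: indeed, working modulo ${\rm Nil}(R)$, the image $\bar a$ in $R/{\rm Nil}(R)$ is an idempotent (since $a - a^2 = (e_0+q_0) - (e_0+q_0)^2 \in {\rm Nil}(R)$), and in the reduced abelian ring $R/{\rm Nil}(R)$ idempotents are unique liftings within their coset, so any idempotent $e$ with $a - e \in {\rm Nil}(R)$ must reduce to $\bar a$; combined with centrality this pins down $e$ uniquely. It remains to show that the clean decomposition $a = e + u$ actually forces $a - e \in {\rm Nil}(R)$: this is where I expect the main obstacle, since a priori $u$ need only be a unit, not a unipotent. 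I would resolve it by first proving that an abelian NCUC-candidate ring is a UU-ring — equivalently that $1 - u \in {\rm Nil}(R)$ for every unit — by applying the nil-clean/clean dichotomy to $a - 1 = e + (u-1)$ or, more cleanly, by imitating the argument in the proof of the theorem ``Every CSNC ring is NCSUC'': since $a$ nil-clean gives $a - 1$ clean (as $a - 1 = e_0 + (q_0 - 1)$ with $q_0 - 1$ a unit), and since clean elements that are shifts of nilpotents-plus-idempotents are controlled, one extracts that $u$ and $v$ differ from $1$ by nilpotents. Once $u, v \in 1 + {\rm Nil}(R)$, both $e$ and $f$ satisfy $a - e, a - f \in {\rm Nil}(R)$, the uniqueness of the central idempotent part gives $e = f$, and $R$ is NCUC.

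Assembling the arrows $(i) \Rightarrow (ii) \Rightarrow (iii) \Rightarrow (iv) \Rightarrow (i)$ and $(iv) \Leftrightarrow (v)$ closes the equivalence. The only place genuine care is needed is the UU-step inside $(iv)\Rightarrow(i)$: one must verify that abelianness together with the existence of some nil-clean decomposition is enough to force units appearing in \emph{any} clean decomposition of a nil-clean element to be unipotent, and I would lift that verification essentially verbatim from the CSNC$\Rightarrow$NCSUC proof already given, since the mechanism — comparing two clean decompositions of $a-1$ and invoking uniqueness of nil-clean decompositions of nilpotents in an abelian ring (Corollary above / \cite[Proposition 2.1]{CDJ}) — is identical.
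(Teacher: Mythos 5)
The routine arrows $(i)\Rightarrow(ii)\Rightarrow(iii)\Rightarrow(iv)$ and the citation for $(iv)\Leftrightarrow(v)$ in your proposal match the paper and are fine. The problem is the substantive step $(iv)\Rightarrow(i)$, where your plan has a genuine gap at exactly the point you flag as delicate. You propose to show that the unit in \emph{any} clean decomposition $a=f+v$ of a nil-clean element is unipotent (a UU-type statement), so that $a-f\in{\rm Nil}(R)$ and uniqueness reduces to uniqueness of idempotents modulo nilpotents. This cannot work: abelianness does not imply UU, and the paper's own table exhibits $\mathbb{Z}_3$ as an NCUC (hence abelian) ring in which $-1$ is a non-unipotent unit. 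Concretely, the nil-clean element $a=0=0+0$ has the clean decomposition $0=1+(-1)$ in $\mathbb{Z}_3$, where the unit $-1$ is not unipotent and $a-f=-1\notin{\rm Nil}(R)$. The mechanism you want to import from the proof that CSNC implies NCSUC is unavailable here because that proof uses the CSNC hypothesis, which \emph{does} force UU (Theorem \ref{thm 2.4}); abelianness alone does not. A secondary issue: your reduction modulo ${\rm Nil}(R)$ presumes ${\rm Nil}(R)$ is an ideal, which abelianness does not guarantee; and even granting it, uniqueness of the nil-clean idempotent is not the same as uniqueness of the clean idempotent, which is what NCUC demands.

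The paper's argument avoids units-are-unipotent entirely. Writing $a=e+q$ (nil-clean) and $a=f+v$ (clean) with $e,f$ central, one has $e-f=v-q$ where $v=a-f$ and $q=a-e$ commute (both are $a$ minus a central element), so $v-q$ is a unit; on the other hand $(e-f)^2$ is an idempotent since $e,f$ are commuting idempotents. Hence $(e-f)^2\in{\rm Id}(R)\cap U(R)=\{1\}$, and expanding $(e-f)^2=1$ together with the identity $f=(2f-1)f$ yields $f=1-e$, which is independent of the chosen clean decomposition. Note that the "canonical" clean idempotent of $a=e+q$ is $1-e$ (from $a=(1-e)+(2e-1+q)$), not $e$ — this is precisely why your attempt to force $a-f$ nilpotent was bound to fail. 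You should replace the UU step with this direct computation, which is the same device already used in Lemma \ref{idempo}.
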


\begin{proof}
(i) $\Rightarrow$  (ii). It is straightforward.

(ii) $\Rightarrow$  (iii). Knowing that each idempotent is strongly nil-clean, the implication follows immediately.

(iii) $\Rightarrow$ (iv). Follows directly from Lemma \ref{lemma 01}.

(iv) $\Leftrightarrow$ (v). Follows directly from \cite[Theorem 2.2]{CDJ}.

(iv) $\Rightarrow$ (i). Suppose $a \in R$ is nil-clean, that is, $a = e + q$, where $e \in {\rm Id}(R)$ and $q \in {\rm Nil}(R)$. Since $R$ is abelian, we can consider the strongly clean decomposition $a = (1-e) + (2e-1 + q)$. Suppose there exists $f \in {\rm Id}(R)$ and $u \in U(R)$ such that $a = f + u$. Since $R$ is abelian, we have $$(e-f)^2 = q-u \in {\rm Id}(R) \cap U(R) = \{ 1\}.$$ So, $(e-f)^2=1$ and, therefore, $f=(2f-1)e+1$.

However, since $f=(2f-1)f$, we can write:
$$f=(2f-1)f=(2f-1)((2f-1)e+1) = (2f-1)(2f-1)e+2f-1 = e+2f-1.$$
Consequently, $f=1-e$, thus completing the arguments.
\end{proof}

\medskip
\medskip
\medskip

\noindent{\bf Funding:} The first-named author, P.V. Danchev, of this research work was partially supported by the Junta de Andaluc\'ia under Grant FQM 264, and by the BIDEB 2221 of T\"UB\'ITAK.

\vskip3.0pc

\end{document}